\numberwithin{equation}{section}
\theoremstyle{plain}
\newtheorem{Thm}{Theorem}[section]
\newtheorem{Rem}{Remark}[section]
\newtheorem{Lem}{Lemma}[section]
\theoremstyle{definition}
\newtheorem{Defn}{Definition}[section]
\author{Thomas Y. Hou\thanks{hou@cms.caltech.edu, Computing and Mathematical Sciences, CalTech, Pasadena, CA 91125.}\and Pengfei Liu\thanks{Correspondence. plliu@caltech.edu, Computing and Mathematical Sciences, CalTech, Pasadena, CA 91125.}}
\date{}
\title{A Heterogeneous Stochastic FEM Framework for Elliptic PDEs}
\begin{document}
\maketitle 
\begin{abstract}
	We introduce a new concept of sparsity for the stochastic elliptic operator $-{\rm div}\left(a(x,\omega)\nabla(\cdot)\right)$, which reflects the compactness of its inverse operator in the stochastic direction and allows for spatially heterogeneous stochastic structure. This new concept of sparsity motivates a heterogeneous stochastic finite element method ({\bf HSFEM}) framework for linear elliptic equations, which discretizes the equations using the heterogeneous coupling of spatial basis with local stochastic basis to exploit the local stochastic structure of the solution space. We also provide a sampling method to construct the local stochastic basis for this framework using the randomized range finding techniques. The resulting HSFEM involves two stages and suits the multi-query setting: in the offline stage, the local stochastic structure of the solution space is identified; in the online stage, the equation can be efficiently solved for multiple forcing functions. An online error estimation and correction procedure through Monte Carlo sampling is given. Numerical results for several problems with high dimensional stochastic input are presented to demonstrate the efficiency of the HSFEM in the online stage.
\end{abstract}
\section{Introduction and Main Results}
Analysis of complex systems requires not only a fine understanding of the underlying physics, but also recognition of the intrinsic uncertainties and their influence on the quantities of interest. Uncertainty Quantification ({\bf UQ}) is an emerging discipline that aims at addressing the latter issue and has attracted growing interest recently. In this paper we consider UQ of the following linear elliptic equation with stochastic coefficient, which can be used to model diffusion processes in random media:
\begin{equation}
	\begin{cases}
		-{\rm div}\left(a(x,\omega)\nabla u(x,\omega)\right)=f(x),\quad x\in D, \quad \omega \in \Omega,\\
		u(x,\omega)|_{\partial D}=0.
	\end{cases}
	\label{targ}
\end{equation} 
Here $D$ is a bounded convex polygon domain in $R^d$, and $(\Omega, \mathcal{F}, P)$ is a probability space with $\Omega\subset R^m$, i.e., the dimension of the stochastic input $\omega$ is $m$. We assume that $f(x) \in L^2(D)$, and $a(x,\omega)$ is bounded and uniformly elliptic, i.e., there exist $\lambda_{\min}$ and $\lambda_{\max}$ such that
\begin{equation}
	\label{ellipticity}
P( \omega\in\Omega:\  a(x,\omega)\in [\lambda_{\rm min},\ \lambda_{\rm max}],\quad \forall x\in D)=1.
\end{equation}
The solution to equation~\eqref{targ} is $u(x,w)\in L^2(H^1_0(D),\Omega)$, such that for any $\phi(x,\omega)\in L^2(H^1_0(D),\Omega)$,
\begin{equation}\int_\Omega\int_D \nabla u(x,\omega)^Ta(x,\omega)\nabla \phi(x,\omega)\mathrm{d}x\mathrm{d}P =\int_\Omega\int_D f(x)\phi(x,\omega)\mathrm{d}x\mathrm{d}P,
\label{solu}
\end{equation} 
where the function space $L^2(H^1_0(D),\Omega)$ is a Hilbert space with inner product defined as
\begin{equation}
	\langle u(x,\omega),v(x,\omega)\rangle =\int_\Omega\int_D\nabla u(x,\omega)^T\nabla v(x,\omega)\mathrm{d}x\mathrm{d}P.
\end{equation}
The existence of solution to equation~\eqref{targ} can be obtained using the Lax-Milgram Theorem,
\begin{equation}
	\label{continuity}
	\|u(x,\omega)\|_{L^2(H^1_0(D),\Omega)}\leq C\|f(x)\|_{H^{-1}(D)},
\end{equation}
and more theoretical aspects of this equation can be found in \cite{babuska2004galerkin}.

Several types of numerical methods have been proposed for UQ of Stochastic Partial Differential Equations ({\bf SPDE}). Perturbation Methods ({\bf PM}) \cite{ghanem1991stochastic,babuvska2002perturb,kleiber1992stochastic}
start with expanding the stochastic solution via Taylor expansion and result in a system of deterministic equations by truncating after certain order. Typically at most second-order expansions are used as the system of equations becomes very cumbersome if higher-order terms are included. One limitation of perturbation methods is that the magnitude of the input and output uncertainty must be small compared with their respective means. Monte Carlo Methods ({\bf MC}) \cite{caflisch1998monte, niederreiter1992random} remain popular for SPDE problems because of its non-intrusive nature especially when the dimension of the stochastic input is high. According to the law of large numbers, the convergence rate of MC is only $O(M^{-1/2})$, where $M$ is the number of realizations. This low convergence rate limits its application. MC Methods are also sensitive to the random number generator and typically produce output with uncertain accuracy. Stochastic Collocation Methods ({\bf SC}) \cite{babuvska2010stochastic, nobile2008sparse, smolyak1963quadrature, mathelin2005stochastic,xiu2005high} collocate the problem in zeros of tensor product orthogonal polynomials and the solution is recovered using interpolation. SC methods attain high accuracy when the solution is smooth with respect to the random variables. However, a very large number of collocation points are required to obtain accuracy when the stochastic input has high dimension because of the tensor product. This is called the {\em curse of dimensionality}. Polynomial Chaos Methods ({\bf PC}) \cite{wiener1938homogeneous, cameron1947orthogonal,hou2006wiener, xiu2002wiener,frauenfelder2005finite}
project the solution $u(x,\omega)$ to an orthogonal polynomial basis  $H_\alpha(\omega)$ with respect to the underlying probability distribution, and approximate the solution by $u(x,\omega)\approx\sum_\alpha u_\alpha(x) H_\alpha(\omega)$. The coefficients $u_\alpha(x)$ can be obtained by solving a system of coupled elliptic equations. PC Methods also suffer from the {\em curse of dimensionality} because a large number of polynomial basis functions are required when the stochastic dimension is high.

SPDE problems with high stochastic dimension are very challenging because of the {\em curse of dimensionality}. There have been many attempts in the literature, e.g. \cite{cheng2013data, cheng2013dynamically1, cheng2013dynamically2,doostan2007stochastic,doostan2011non, math2012a, sargsyan2014dimensionality,bieri2009sparse,bieri2009sparse1, bieri2011sparse} to attack these challenging problems. Most of them take advantage of the fact that even though the stochastic input has high dimension, the solution actually lives in a relatively low dimensional space, i.e., enjoys some sense of sparsity. In \cite{doostan2011non, math2012a, sargsyan2014dimensionality} the compressive sensing technique is employed to identify a sparse representation of the solution in the stochastic direction. In \cite{cheng2013dynamically1, cheng2013dynamically2,doostan2007stochastic}, the Karhunen-Lo\`eve expansion of the solution is used to compactly represent the solution and reduce degrees of freedom. In \cite{ito1998reduced, rozza2008reduced}, the authors construct a reduced spatial basis using snapshots of the solutions to obtain computational savings for each realization of the stochastic equation. The present work also seeks to attack the {\em curse of dimensionality} by exploring the sparsity of the solution space. Our methodology differs from the previous sparsity-exploiting methods in the following two aspects: we use different stochastic basis functions in different regions of the domain to approximate the solution, allowing for spatially heterogeneous stochastic structure of the solution space;  we seek a sparse representation of the whole solution space for all $f(x)\in L^2(D)$, not a specific solution, thus our method suits the multi-query setting. 

In this work we first introduce a new concept of sparsity for the stochastic operator $-{\rm div}\left(a(x,\omega)\nabla(\cdot)\right)$. We consider a finite dimensional approximation to the solution space of \eqref{targ} taking the form of 
		\begin{equation}
			V_h=\{\sum_{i=1}^n\sum_{j=0}^{k_i}c_i^j\phi_i(x)\xi_i^j(\omega): c_i^j\in R\},
			\label{ftrialspace}
		\end{equation}
		where $\{\phi_i(x)\}$ is a standard piecewise linear basis that can resolve the spatial variation of the solution, and $\xi_i^j(\omega), j=0,\dots k_i$ are the {\bf local stochastic basis} functions associated with $\phi_i(x)$. Note that in~\eqref{ftrialspace}, different local stochastic basis functions are used in different regions of the domain to approximate the solution, and this allows for spatially heterogeneous stochastic structure of the solution space to \eqref{targ}. To obtain certain accuracy using \eqref{ftrialspace}, the required $k=\sum_{i=1}^n k_i/n$, which is the relative size of $V_h$ to the approximate solution space of the corresponding deterministic equation, measures the compactness of the inverse of the stochastic operator in the stochastic direction. If $k$ is {\em small}, then we say the stochastic operator enjoys the Operator-Sparsity. We call it {\bf weak} or {\bf strong} Operator-Sparsity depending on whether the approximation is taken to be in $L^2(D\times\Omega)$ or $L^2(H^1_0(D),\Omega)$. An interpretation of this Operator-Sparsity in terms of the decay rate of the singular values in the Karhunen-Lo\`eve expansion of the Green's function is given. We also prove that to obtain certain approximation accuracy using $V_h$ in $L^2(D\times\Omega)$, the required $k$ has an upper bound that only depends on the ellipticity of the operator, $\lambda_{\min}$ and $\lambda_{\max}$, and in particular, does not depend on the dimension of the stochastic input. 

		This new concept of sparsity motivates a heterogeneous stochastic finite element method framework ({\bf HSFEM}) for linear elliptic equations. We first construct a local stochastic basis $\{\xi_i^j(\omega)\}$ and trial space $V_h$ \eqref{ftrialspace}. Then based on \eqref{solu}, we define the numerical solution as $u_h(x,\omega)\in V_h$, such that
		\[\int_\Omega\int_D \nabla u_h(x,\omega)^Ta(x,\omega)\nabla v(x,\omega)\mathrm{d}x\mathrm{d}P=\int_\Omega\int_D f(x)v(x,\omega)\mathrm{d}x\mathrm{d}P,\ \ \forall v(x,\omega)\in V_h.\]
The numerical solution defined as above satisfies the following quasi-optimality property
		\[\|u(x,\omega)-u_h(x,\omega)\|_{L^2(H^1_0(D),\Omega)}\leq \lambda_{\rm max}/\lambda_{\rm min}\inf_{v\in V_h}\|u(x,\omega)-v(x,\omega)\|_{L^2(H^1_0(D),\Omega)}.\]
		The key difference of this HSFEM framework from classical methods like PC or SC methods is that different local stochastic basis functions are used in different regions of the domain to discretize the equation, which allows for spatially heterogeneous stochastic structure of the solution space. If the operator enjoys the (strong) Operator-Sparsity, this HSFEM framework can attain high accuracy using only a small trial space $V_h$. This HSFEM framework can be viewed as a generalization of the PC methods using {\bf problem-dependent} and {\bf local} stochastic basis, and provides a novel direction to attack the {\em curse of dimensionality} by exploiting the local stochastic structure of the solution space.

		We also provide a sampling method to construct the local stochastic basis for the HSFEM framework using the randomized range finding techniques. Given the HSFEM framework, a suitable local stochastic basis $\{\xi_i^j(\omega)\}$ still need to be constructed. Since the spatial basis $\phi_i(x)$ has local support around the node point $x_i$, the local stochastic basis functions $\xi_i^j(\omega), j=0,\dots,k_i$ in the trial space \eqref{ftrialspace} are only used to approximate the solution in the stochastic direction near $x_i$. So we consider the following linear operator which maps the forcing function to the stochastic part of the solution at $x_i$,
		\[T_i:\quad L^2(D)\to L^2(\Omega),\quad f(x)\to u(x_i,\omega)-E[u(x_i,\omega)].\]
$T_i$ can be shown to be a compact linear operator thus can be approximated by a matrix. We apply the randomized range finding techniques~\cite{halko2011finding} to $T_i$ to construct the local stochastic basis functions $\xi_i^j(\omega), j=1,\dots,k_i$. The resulting local stochastic basis functions are close to optimal in approximating the range of $T_i$ in the $L^2(\Omega)$ sense if the singular values of $T_i$ decay fast.

The HSFEM framework combined with the sampling method to construct the local stochastic basis constitutes a heterogeneous stochastic finite element method. This method involves two stages: 
		\begin{itemize}
			\item In the offline stage we identify the sparse structure of the solution space. We construct the local stochastic basis by solving the SPDE for a number of randomly chosen forcing functions followed by some orthogonalization process, and then build the corresponding stiffness matrix. 
			\item In the online stage, the equation can be efficiently solved for multiple forcing functions using the stiffness matrix constructed in the offline stage, which is small and sparse.
		\end{itemize}
		The offline computation of the HSFEM is expensive since it involves solving the SPDE for a number of times using traditional methods, and this limits its application to the multi-query setting, which means the equation needs to be solved multiple times using different forcing functions. Our method suits the multi-query setting because the linear system we solve online is sparse and small. Methods to reduce the offline computation cost and an online error estimation and correction procedure are given. 
		
We present numerical results for several problems with high dimensional stochastic input to demonstrate the efficiency of the HSFEM. In our numerical examples, we show that the solution space to stochastic elliptic equation has spatially heterogeneous stochastic structure, and this heterogeneity can be recognized by the HSFEM in the offline stage. Moreover, the number of the constructed local stochastic basis functions is small and the online numerical solutions have high accuracy, which implies that the stochastic operators that we consider enjoy the Oprator-Sparsity and the local stochastic basis constructed using the sampling method works well within the HSFEM framework. We also compare the accuracy of the HSFEM numerical solution with the truncated Karhunen-Lo\`eve expansion of the solution to demonstrate the advantage of approximating the stochastic part of the solution locally.

The rest part of this paper is organized as follows. In section~\ref{sparsitysection}, we introduce the Karhunen-Lo\`eve expansion and the new concept of sparsity. In section \ref{hsfemsection}, we develop the HSFEM framework. In section~\ref{samplingsection}, we discuss the sampling method to construct the local stochastic basis. In section \ref{implementationsection}, we address several issues related to the numerical implementation of the HSFEM. In section \ref{examples}, we present our numerical results. Section~\ref{concludingsection} is devoted to concluding remarks and future work.


\section{A New Concept of Sparsity for Stochastic Elliptic Operator}
\label{sparsitysection}
In this section, we first introduce the Karhunen-Lo\`eve~({\bf KL}) expansion \cite{ghanem1991stochastic, stark1986probability} and the sense of sparsity related to the KL expansion. Two observations are made on the KL expansion, which motivate an Operator-Sparsity allowing for spatially heterogeneous stochastic structure of the corresponding solution space. An interpretation of this new concept of sparsity in terms of the Green's function is given.   

\subsection{Karhunen-Lo\`eve Expansion}
\label{klsection}
If a stochastic process $u(x,\omega)\in L^2(D\times \Omega)$, then it can be expanded in a Fourier-type series as:\cite{ghanem1991stochastic}
\begin{equation}
	\label{kLexpansion}
	u(x,\omega)=\bar{u}(x)+\sum_{i=1}^\infty\sqrt{\lambda_i}\psi_i(x)\xi_i(\omega),\quad \lambda_1\geq \lambda_2\geq \dots\geq\lambda_i\geq \dots > 0,
\end{equation}
where $\bar{u}(x)=\int_\Omega u(x,\omega)\mathrm{d}P$, $\psi_i(x)$ are orthonormal in $L^2(D)$ and $\xi_i(\omega)$ are uncorrelated uni-variate random variables with mean $0$. \eqref{kLexpansion} is called the Karhunen-Lo\`eve ({\bf KL}) expansion of $u(x,\omega)$, and $\psi_i(x)$ can be computed as eigenfunctions of the covariance function of $u(x,\omega)$. The $k$-term truncation 
\begin{equation} 
	\label{klapprox}
\sum_{i=1}^k\sqrt{\lambda_i}\psi_i(x)\xi_i(\omega)
\end{equation}
is the best rank-$k$ approximation of $u(x,\omega)-\overline{u}(x)$ in $L^2(D\times \Omega)$.\cite{ghanem1991stochastic}

If the singular values $\sqrt{\lambda_i}$ decay very fast in the KL expansion, then the solution has data-sparsity in the sense that a few data can provide an accurate description of the solution. More discussion about the data-sparsity can be found in \cite{hackbusch1999sparse,hackbusch2000sparse}. This sense of sparsity has been exploited to reduce the computation cost in \cite{cheng2013data, cheng2013dynamically1, cheng2013dynamically2, doostan2007stochastic}, and we refer it as the KL-sense Sparsity.

\begin{Defn}[KL-sense Sparsity]
	The solution $u(x,\omega)$ to equation~\eqref{targ} is said to have KL-sense sparsity if the singular values $\sqrt{\lambda_i}$ in its KL expansion decay very fast (e.g. exponential decay), and a small number of modes are sufficient to represent the solution accurately.
\end{Defn}
\begin{Rem}
	\label{fast}
Here we leave the notions `decay very fast' and `accurately' vague, because they depend on the nature of the problem and the desired order of accuracy.
\end{Rem}

We make the following two observations on the KL expansion and the KL-sense sparsity:
\begin{itemize}
	\item The KL-sense sparsity reflects the property of some solution to equation~\eqref{targ} with a specific forcing, not the property of the solution space. The first several stochastic basis functions $\xi_i(\omega)$ in the KL expansion of one specific solution do not necessarily approximate another solution well.
	\item The truncated KL expansion \eqref{klapprox} seeks to approximate the stochastic behavior of the solution in different regions of the domain using the same set of random variables $\xi_i(\omega), i=1,\dots, k$, which is quite restrictive since the solution may have spatially heterogeneous stochastic structure.
\end{itemize}
These two observations motivate us to define the following Operator-Sparsity.

\subsection{A New Concept of Sparsity for the Stochastic Operator}
We consider the inverse of the stochastic elliptic operator $L(x,\omega)=-{\rm div}\left(a(x,\omega)\nabla(\cdot)\right)$, which is denoted as $L^{-1}(x,\omega)$. Then $L^{-1}(x,\omega)$ maps $f(x)\in L^2(D)$ to $u(x,\omega)$, which is the solution to equation~\eqref{targ}.
\begin{Rem}[Random Forcing Functions]
	In this paper, we only consider deterministic forcing $f(x)$. For the case that the right hand side of \eqref{targ} has randomness which we denote by $f(x,\omega)$, we can first decompose $f(x,\omega)$ (for example, using KL expansion) to
\[f(x,\omega)\approx f_0(x)+\sum_{i=1}^{N_f}f_i(x)\xi_i(\omega).\]
Then based on \eqref{targ}, the corresponding solution can be approximated using
\begin{equation*}
	u(x,\omega)\approx L^{-1}(x,\omega)f_0(x)+\sum_{i=1}^{N_f}\xi_i(\omega)L^{-1}(x,\omega)f_i(x),
	\label{randomforce}
\end{equation*}
which means once we have obtained an efficient solver for equation~\eqref{targ} with deterministic forcing, we can also solve the equation with random forcing efficiently.
\end{Rem}

We seek to construct a finite dimensional linear operator $L_h^{-1}(x,\omega)$ to approximate $L^{-1}(x,\omega)$. It is known that the Lagrange basis (piecewise polynomials) can approximate the solution to deterministic elliptic equation very well \cite{steinbach2008} if the coefficient $a(x)$ is in $C^1(D)$. For the SPDE, it is natural to use the product ({\bf not} tensor product) of these Lagrange basis functions with some stochastic basis to approximate the solution to equation~\eqref{targ}. To be specific: assume $D$ is equipped with a triangular mesh with node points $x_i, i=1,\dots, n$, and $\phi_i(x)$ are the piecewise linear functions on $D$ satisfying \[\phi_i(x_j)=\delta_{ij}.\]
Then we want to construct a stochastic basis $\{\xi_i^j(\omega)\}$, and use
\begin{equation} 
	\label{trialspace}
		V_h=\{\sum_{i=1}^n\sum_{j=0}^{k_i}c_i^j\phi_i(x)\xi_i^j(\omega): c_i^j\in R\}
\end{equation}
to approximate the solution space of equation~\eqref{targ}. Note that $V_h$ consists of the heterogeneous coupling of spatial basis with stochastic basis, namely, we use different stochastic basis functions to couple with different spatial basis functions to approximate the solution space to \eqref{targ}, and this allows for spatially heterogeneous stochastic structure of the solution space. We want
\begin{equation}
	\label{sparsity}
	\sup_{f(x)\in L^2(D)}\frac{ \inf_{c_i^j\in R}\|u(x,\omega)-\sum_{i=1}^n\sum_{j=0}^{k_i}c_i^j\phi_i(x)\xi_i^j(\omega)\|_H}{\|f(x)\|_{L^2(D)}}\leq \epsilon,
\end{equation}
where $\|\cdot\|_H$ can be taken to be the $L^2(H^1_0(D),\Omega)$ norm or $L^2(D\times\Omega)$ norm.

We call $\{\phi_i(x)\xi_i^j(\omega)\}$ the {\bf Coupling Basis}. Assume that to obtain {\bf certain order of accuracy}, the total number of the coupling basis functions in the above approximation~\eqref{sparsity} is 
\begin{equation*}
	S=\sum_{i=1}^n (k_i+1).
\end{equation*} 

The ratio of $S$ over $n$ is the relative size of the approximate solution space of \eqref{targ}, $V_h$, to the corresponding deterministic equation, thus measures the compactness of the inverse operator $L^{-1}(x,\omega)$ in the stochastic direction. This motivates us to define the following sense of sparsity.
\begin{Defn}[Operator-Sparsity]\label{defos}
	For the stochastic elliptic operator 
	\begin{equation*}
		L(x,\omega)=-{\rm div}\left(a(x,\omega)\nabla(\cdot)\right),
	\end{equation*}
	if the inverse of $L(x,\omega)$ can be well-approximated using finite dimensional operator in the sense of \eqref{sparsity}, and the average number of stochastic basis functions for each node point is small, i.e.,
	\begin{equation*}
		k=\frac{1}{n}\sum_{i=1}^nk_i \quad \text{is small},
	\end{equation*}
then we say that $L(x,\omega)$ has Operator-Sparsity.

If the approximation is taken to be in the $L^2(H^1_0(D),\Omega)$ sense, we call it the {\bf strong} Operator-Sparsity. Or if the approximation is taken to be in the $L^2(D\times\Omega)$ sense, we call it the {\bf weak} Operator-Sparsity.
\end{Defn}
\begin{Rem}
	We leave the notions `well-approximated' and `small' vague since they depend on the desired order of accuracy and the nature of the problem.
\end{Rem}
\subsection{Interpretation of the Operator-Sparsity}
\label{interpretation}
Since $\phi_i(x)$ has compact support near $x_i$, the $\xi_i^j(\omega)$, $0=1,\dots, k_i$, in \eqref{sparsity} are only used to approximate the solution in the stochastic direction near $x_i$. So we call $\{\xi_i^j(\omega)\}$ the {\bf Local Stochastic Basis}.

To give an interpretation of this new concept of sparsity, we consider the local stochastic behavior of the solution near $x_i$. Let $G(x,y,\omega)$ be the Green's function corresponding to the elliptic operator $-{\rm div} \left(a(x, \omega) \nabla (\cdot) \right)$, then the solution to equation~\eqref{targ} at $x_i$ is
\begin{equation*}
	u(x_i,\omega)=\int_D G(x_i,y,\omega)f(y)\mathrm{d}y.
\end{equation*}
For $d \leq 3$, based on the estimates of Green's function in \cite{gruter1982green, taylor2013green}, we have that for fixed $x_i$ and $\omega\in\Omega$,
\begin{equation}
	\label{greenestimate}
	\|G(x_i,y,\omega)\|_{L^2(D)}\leq C_i,
\end{equation}
where $C_i$ depends on $x_i$, $\lambda_{\min}$, $\lambda_{\max}$ and the domain $D$, but does not depend on the dimension of $\omega$. Integrating \eqref{greenestimate} in $\Omega$, we get
\[\|G(x_i,y,\omega)\|_{L^2(D\times\Omega)}\leq C_i<+\infty.\]

So the Green's function $G(x_i,y,\omega)$ has KL expansion
\begin{equation}
	G(x_i,y,\omega)=\bar G(x_i,y)+\sum_{j=1}^\infty\sigma_i^j\psi_i^j(y)\eta_i^j(\omega),\quad \sum_{j=1}^{\infty}(\sigma_i^j)^2\leq C_i^2,
	\label{KLgreen}
\end{equation}
where $\psi_i^j(y)$ and $\eta_i^j(\omega)$, $j=1,\dots, \infty$ are orthonormal in $L^2(D)$ and $L^2(\omega)$ respectively. Then 
\begin{equation}
	u(x_i,\omega)-\bar u(x_i)=\sum_{j=1}^\infty \sigma_i^j \eta_i^j(\omega)\int_D f(y)\psi_i^j(y)\mathrm{d}y.
	\label{compactevi}
\end{equation}

Denote $k_i$ as the number of required terms to make the singular values $\sigma_i^j$ decay to $\epsilon$, i.e.,
\begin{equation}
	\label{minki}
	k_i=\min\{j:\sigma_i^j<\epsilon\}.
\end{equation}

Since $\sum_{j=1}^\infty(\sigma_i^j)^2\leq C_i^2$, we have
\begin{equation}
k_i\leq [C_i^2/\epsilon^2],
	\label{worst}
\end{equation}
where the right hand side does not depend on the dimension of the stochastic input $\omega$. 

When the first $k_i$ stochastic basis functions are used to approximate the stochastic solution, we have 
\begin{equation}
		\label{greenapp}
		\|u(x_i,\omega)-\bar u(x_i)-\sum_{j=1}^{k_i}\sigma_i^j\eta_i^j(\omega)\int_Df(y)\psi_i^j(y)\mathrm{d}y\|_{L^2(\Omega)}\leq \sigma_i^{k_i+1}\|f(x)\|_{L^2(\Omega)}\leq \epsilon\|f(x)\|_{L^2(D)}.
\end{equation}

\begin{Rem}
	The estimate \eqref{worst} corresponds to the worst case that the first $[C_i^2/\epsilon^2]$ singular values in the KL expansion of $G(x_i,y,\omega)$ are equal to $\epsilon$ and others are $0$. It is a very pessimistic estimate, and according to our numerical results in section~\ref{compactsection}, the singular values $\sigma_i^j$ actually decay exponentially fast (for that particular example), which means there exist $c_i, C_i>0$, such that
	\begin{equation*}
		\sigma_i^j\leq C_ie^{-c_ij}.
	\end{equation*}
	Then to obtain $\epsilon$-accuracy in approximating the stochastic part of the solution at $x_i$, only
	\begin{equation}
		k_i=[\frac{\log C_i-\log\epsilon}{c_i}],
		\label{mildgrowth}
	\end{equation}
	stochastic basis functions are needed. The decay rate of the singular values in the KL expansion of the Green's function $G(x_i,y,\omega)$ will be investigated in another work.
\end{Rem}

So if the singular values in the KL expansion of $G(x_i,y,\omega)$, $\sigma_i^j$, decay fast, a small number of stochastic basis functions $\eta_i^j(\omega), j=1,\dots, k_i$ can approximate the local stochastic behavior of the solution near $x_i$ well. This is very close to (but not exactly the same as) our definition of Operator-Sparsity, so we can interpret the Operator-Sparsity as the {\bf low-rankness} of the Green's function $G(x,y,\omega)$ for fixed $x$.
	
To make this interpretation rigorous, we denote $Ju(x,\omega)$ as the piecewise linear interpolation of the solution $u(x,\omega)$ using the spatial basis $\phi_i(x)$, which means $Ju(x_i,\omega)=u(x_i,\omega)$. We assume that the spatial basis $\phi_i(x)$, $i=1,\dots, n$ can resolve the variation of the solution in the spatial direction, then 
\begin{equation}
	\label{negligible}
	\|u(x,\omega)-Ju(x,\omega)\|_{L^2(D\times\Omega)}
\end{equation}
is small and we ignore this interpolation error. We assume that the domain is divided into elements $\tau_i$, $i=1, \dots, n_t$, which are triangles for $d=2$ and tetrahedrons for $d=3$, and let $x_i^j$ be the nodes of $\tau_i$. 
\begin{Lem}
	\label{nouse}
	Choose the first $k_i$ \eqref{minki} stochastic basis functions in the KL expansion of $G(x_i,y,\omega)$, $\eta_i^j(\omega)$, $j=1,\dots, k_i$ as the local stochastic basis in \eqref{trialspace} and let $\xi_i^0(\omega)=1$. Then 
\begin{equation}
	\inf_{v(x,\omega)\in V_h}\|u(x,\omega)-v(x,\omega)\|_{L^2(D\times\Omega)}\leq C\epsilon\|f(x)\|_{L^2(D)}.\label{Lestimate}
\end{equation}
\end{Lem}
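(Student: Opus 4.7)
The plan is to construct an explicit element $v \in V_h$ using the coefficients from the KL expansion~\eqref{KLgreen} of the Green's function at each node, then bound $\|u - v\|_{L^2(D \times \Omega)}$ by the triangle inequality through the interpolant $Ju$. The interpolation error $\|u - Ju\|_{L^2(D \times \Omega)}$ is negligible by assumption~\eqref{negligible}, so the task reduces to bounding $\|Ju - v\|_{L^2(D \times \Omega)}$.

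First I would define $v(x,\omega) \in V_h$ explicitly by taking $c_i^0 = \bar u(x_i)$ and
\[
c_i^j \;=\; \sigma_i^j \int_D f(y)\psi_i^j(y)\,\mathrm{d}y, \qquad j=1,\dots, k_i.
\]
Since $\phi_i(x_\ell) = \delta_{i\ell}$, this choice gives
\[
v(x_i,\omega) \;=\; \bar u(x_i) + \sum_{j=1}^{k_i}\sigma_i^j \eta_i^j(\omega)\int_D f(y)\psi_i^j(y)\,\mathrm{d}y,
\]
which is precisely the $k_i$-term truncation of the representation~\eqref{compactevi} of $u(x_i,\omega)$. Hence estimate~\eqref{greenapp} immediately gives the nodal control
\[
\|u(x_i,\omega) - v(x_i,\omega)\|_{L^2(\Omega)} \;\leq\; \epsilon\,\|f\|_{L^2(D)}, \qquad i=1,\dots,n.
\]

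Next I would transfer these nodal $L^2(\Omega)$ bounds into a global $L^2(D\times \Omega)$ bound. Observe that $v(x,\omega)$ and $Ju(x,\omega)$ are both piecewise linear in $x$ with respect to the same mesh, so their difference $Ju - v$ is the piecewise linear interpolant of the nodal error $e_i(\omega) := u(x_i,\omega) - v(x_i,\omega)$. A standard local estimate for piecewise linear functions on a simplex $\tau$ with vertices $x_\tau^1,\dots, x_\tau^{d+1}$ gives
\[
\int_\tau |Ju(x,\omega) - v(x,\omega)|^2\,\mathrm{d}x \;\leq\; C\,|\tau|\sum_{\ell=1}^{d+1} |e_{\tau,\ell}(\omega)|^2.
\]
Integrating in $\omega$, summing over elements, and applying the nodal bound yields
\[
\|Ju - v\|_{L^2(D\times\Omega)}^2 \;\leq\; C\,|D|\,(d+1)\,\epsilon^2\,\|f\|_{L^2(D)}^2.
\]
Combining this with $\|u - Ju\|_{L^2(D\times\Omega)}$ being negligible via the triangle inequality produces the desired bound~\eqref{Lestimate}.

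The only real subtlety is the passage from nodal $L^2(\Omega)$ errors to a global $L^2(D \times \Omega)$ error; this is handled cleanly because $Ju - v$ is piecewise linear and interpolates the nodal errors, so the stochastic and spatial integrations decouple on each element. Everything else is a direct bookkeeping exercise using~\eqref{greenapp}, the orthonormality of $\{\eta_i^j\}$ in $L^2(\Omega)$, and the assumption that the spatial mesh resolves the solution.
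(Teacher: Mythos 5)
Your proposal is correct and follows essentially the same route as the paper's own proof: you construct the same explicit element of $V_h$ (the paper calls it $Pu$, with the same coefficients $c_i^0=\bar u(x_i)$ and $c_i^j=\sigma_i^j\int_Df\psi_i^j$), invoke \eqref{greenapp} for the nodal $L^2(\Omega)$ bound, and pass to $L^2(D\times\Omega)$ via the element-wise estimate $\|\cdot\|_{L^2(\tau)}^2\leq C|\tau|\sum_{\text{nodes}}(\cdot)^2$ for piecewise linear functions, finishing with \eqref{negligible}. No substantive differences.
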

\begin{proof}
Let $c_i^0=\bar u(x_i)$, $c_i^j=\sigma_i^j\int_Df(y)\psi_i^j(y)\mathrm{d}y$ for $j>0$, and denote $Pu(x,\omega)\in V_h$ as
\begin{equation*}
	Pu(x,\omega)=-\sum_{i=1}^n\sum_{j=0}^{k_i}c_i^j\phi_i(x)\eta_i^j(\omega).
\end{equation*}
Then 
\begin{equation*}
	Ju(x_i,\omega)-Pu(x_i,\omega)=u(x_i,\omega)-\bar u(x_i)-\sum_{j=1}^{k_i}\sigma_i^j\eta_i^j(\omega)\int_Df(y)\psi_i^j(y)\mathrm{d}y.
\end{equation*}
According to \eqref{greenapp},
\begin{equation}
	\label{simplifiednote}
	\|Ju(x_i,\omega)-Pu(x_i,\omega)\|_{L^2(\Omega)}\leq \epsilon\|f(x)\|_{L^2(D)}.
\end{equation}
Consider the $L^2(D\times\Omega)$ norm of the error,
\begin{equation}
	\label{L2expand}
	\|Ju(x,\omega)-Pu(x,\omega)\|^2_{L^2(D\times\Omega)}=\int_\Omega\|Ju(x,\omega)-Pu(x,\omega)\|^2_{L^2(D)}\mathrm{d}P.
\end{equation}
Since for fixed $\omega$, $Ju(x,\omega)-Pu(x,\omega)$ is piecewise linear, we have
\begin{align}
	\|Ju(x,\omega)-Pu(x,\omega)\|^2_{L^2(D)}&=\sum_{i=1}^{n_t}\|Ju(x,\omega)-Pu(x,\omega)\|_{L^2(\tau_i)}^2\\
	&\leq \sum_{i=1}^{n_t}|\tau_i|(\sum_{j}(Ju(x_i^j,\omega)-Pu(x_i^j,\omega))^2.\label{L2crude}
\end{align}
Putting the estimate \eqref{L2expand} in \eqref{L2crude}, we get
\begin{equation*}
	\|Ju(x,\omega)-Pu(x,\omega)\|_{L^2(D\times\Omega)}^2\leq \sum_{i=1}^{n_t}|\tau_i|\sum_j \|Ju(x_i^j,\omega)-Pu(x^j_i,\omega)\|_{L^2(\Omega)}^2.
\end{equation*}
According to \eqref{simplifiednote}, we get 
\begin{equation*}
	\|Ju(x,\omega)-Pu(x,\omega)\|_{L^2(D\times\Omega)}\leq C\epsilon\|f(x)\|_{L^2(D)}.
\end{equation*}
With \eqref{negligible}, we complete the proof.
\end{proof}
\begin{Rem}
	To obtain a similar estimate for the approximation error of $V_h$ in the $L^2(H^1_0(D),\Omega)$ sense, we need the regularity of the error in the spatial direction. We leave this issue to our future work.
\end{Rem}

Based on \eqref{worst} and estimate \eqref{Lestimate}, we have that to obtain certain approximation accuracy of the solution space to \eqref{targ} using finite dimensional space \eqref{trialspace} in $L^2(D\times\Omega)$, the required number of local stochastic basis functions $k_i$ has an upper bound that only depends on the ellipticity of the stochastic operator, $\lambda_{\min}$ and $\lambda_{\max}$, and in particular, does not depend on the dimension of the stochastic input. 

Before we go to the next section, we make the following observations on this Operator-Sparsity.
\begin{itemize}
	\item The Operator-Sparsity reflects the compactness of $L^{-1}(x,\omega)$ in the stochastic direction, but it is {\bf stronger} than compactness because we have restricted the finite dimensional space approximating the range of $L^{-1}(x,\omega)$ to take the form of \eqref{trialspace}.   
	\item The Operator-Sparsity permits using different stochastic basis functions in different regions of the domain to approximate the solution thus allows for spatially heterogeneous stochastic structure of the solution space. In this sense it is {\bf weaker} than the KL-sense Sparsity.
	\item The Operator-Sparsity reflects the property of the stochastic elliptic operator $-{\rm div}\left(a(x,\omega)\nabla(\cdot)\right)$ independent of the forcing function, thus it can be naturally exploited to solve equation~\eqref{targ} for multiple forcing functions and significantly reduce the computation cost.
\end{itemize}

Our definition of Operator-Sparsity \eqref{sparsity} together with the weak formulation of the stochastic elliptic equation~\eqref{solu} leads to a heterogeneous stochastic finite element method framework ({\bf HSFEM}) to solve linear stochastic elliptic equations, which is discussed in detail in the next section.

\section{A Heterogeneous Stochastic FEM Framework}
\label{hsfemsection}
Define a functional $J$ on $L^2(H^1_0(D),\Omega)$ as
\begin{equation*}
	J(u(x,\omega))=\frac{1}{2}\int_\Omega\int_D\nabla u(x,\omega)^Ta(x,\omega)\nabla u(x,\omega)\mathrm{d}x\mathrm{d}P-\int_\Omega\int_Du(x,\omega)f(x)\mathrm{d}x\mathrm{d}P.
\end{equation*}
Then we have
\begin{Thm}[Variational Formulation]The weak formulation of the stochastic elliptic equation~\eqref{solu} is equivalent to finding 
	\begin{equation*}
		u(x,\omega)\in L^2(H^1_0(D),\Omega),
	\end{equation*}
such that 
\begin{equation}
	\label{variation}
	J(u(x,\omega))\leq J(v(x,\omega)), \ \ \forall v(x,\omega)\in L^2(H^1_0(D),\Omega).
\end{equation}
\end{Thm}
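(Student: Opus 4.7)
The plan is to use the standard calculus-of-variations argument for symmetric coercive bilinear forms, adapted to the product space $L^2(H^1_0(D),\Omega)$. I will prove the two implications separately, with the bulk of the work being an expansion of $J(u+w)$ around a candidate $u$.

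First, I introduce the bilinear form $B(u,v)=\int_\Omega\int_D \nabla u^T a(x,\omega)\nabla v\,\mathrm{d}x\,\mathrm{d}P$ and the linear functional $\ell(v)=\int_\Omega\int_D f(x)v\,\mathrm{d}x\,\mathrm{d}P$, so that the weak formulation \eqref{solu} reads $B(u,v)=\ell(v)$ for all $v\in L^2(H^1_0(D),\Omega)$, and $J(u)=\tfrac{1}{2}B(u,u)-\ell(u)$. The key observation is that $B$ is symmetric (because $a(x,\omega)$ is a scalar multiplier) and positive semidefinite, and in fact coercive thanks to the uniform ellipticity~\eqref{ellipticity}, which gives $B(w,w)\geq \lambda_{\min}\|w\|^2_{L^2(H^1_0(D),\Omega)}\geq 0$.

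Next, for the forward direction, I assume $u$ solves~\eqref{solu} and pick an arbitrary $v\in L^2(H^1_0(D),\Omega)$. Setting $w=v-u$ and using bilinearity plus the symmetry of $B$, I expand
\begin{equation*}
J(v)=J(u+w)=J(u)+\bigl(B(u,w)-\ell(w)\bigr)+\tfrac{1}{2}B(w,w).
\end{equation*}
The middle term vanishes by the weak formulation applied to the test function $w$, and the remaining term $\tfrac{1}{2}B(w,w)$ is nonnegative by ellipticity, yielding $J(u)\leq J(v)$.

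For the reverse direction, I assume $u$ minimizes $J$ over $L^2(H^1_0(D),\Omega)$. For any $v\in L^2(H^1_0(D),\Omega)$ and $t\in\mathbb{R}$, the function $g(t):=J(u+tv)=J(u)+t\bigl(B(u,v)-\ell(v)\bigr)+\tfrac{t^2}{2}B(v,v)$ is a quadratic in $t$ attaining its minimum at $t=0$, so $g'(0)=0$ forces $B(u,v)=\ell(v)$, which is precisely~\eqref{solu}. Since $v$ was arbitrary, $u$ solves the weak formulation.

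I do not anticipate any genuine obstacles here: the only subtlety is verifying that the bilinear form $B$ is indeed well defined and symmetric on the full product Hilbert space $L^2(H^1_0(D),\Omega)$, which follows from~\eqref{ellipticity} and the Cauchy--Schwarz inequality. The Lax--Milgram guarantee~\eqref{continuity} already ensures existence and uniqueness of the weak solution, so both sides of the equivalence refer to the same object, and no additional regularity or sampling argument is required.
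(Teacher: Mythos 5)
Your argument is correct: the symmetric-coercive expansion $J(u+w)=J(u)+\bigl(B(u,w)-\ell(w)\bigr)+\tfrac{1}{2}B(w,w)$ together with the quadratic $g(t)=J(u+tv)$ for the converse is exactly the standard equivalence, and the coercivity you invoke follows from~\eqref{ellipticity}. The paper states this theorem without proof, treating it as the classical Dirichlet-principle result carried over verbatim to the Hilbert space $L^2(H^1_0(D),\Omega)$, so your write-up simply supplies the omitted standard argument and there is nothing to reconcile.
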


Based on \eqref{variation}, given a finite dimensional subspace of $L^2(H^1_0(D),\Omega)$, $V_h$, the corresponding finite element formulation of equation~\eqref{targ} is finding $u_h(x,\omega)\in V_h$ such that
\begin{equation}
	\label{fem}
	J(u_h(x,\omega))\leq J(v(x,\omega)), \ \ \forall v(x,\omega)\in V_h.
\end{equation}
\begin{Thm}[Finite Element Formulation]
	The finite element formulation~\eqref{fem} is equivalent to finding 
	\begin{equation*}
		u_h(x,\omega)\in V_h,
	\end{equation*}
	such that
\begin{equation}
	\label{ortho}
\int_\Omega\int_D \nabla u_h(x,\omega)^T a(x,\omega) \nabla v(x,\omega) \mathrm{d}x\mathrm{d}P=\int_\Omega\int_D f(x)v(x,\omega)\mathrm{d}x\mathrm{d}P.\quad \forall v\in V_h.
\end{equation}
\end{Thm}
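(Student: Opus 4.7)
The plan is to establish the equivalence by the standard calculus of variations argument used in the deterministic Galerkin setting, adapted to the product space $L^2(H^1_0(D),\Omega)$. Since $V_h$ is a linear subspace, for any $u_h, v\in V_h$ and any $\epsilon\in R$ we have $u_h+\epsilon v\in V_h$, so both directions reduce to expanding the quadratic functional $J$ in $\epsilon$ and exploiting the symmetry and coercivity of the bilinear form
\[
a(u,\phi)=\int_\Omega\int_D \nabla u(x,\omega)^T a(x,\omega)\nabla\phi(x,\omega)\,\mathrm{d}x\mathrm{d}P,
\]
which is symmetric because $a(x,\omega)$ is scalar-valued, and non-negative (in fact coercive by the ellipticity assumption \eqref{ellipticity}).

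First I would prove the forward implication \eqref{fem} $\Rightarrow$ \eqref{ortho}. Assume $u_h$ minimizes $J$ over $V_h$. For any $v\in V_h$ and $\epsilon\in R$, define $g(\epsilon)=J(u_h+\epsilon v)$. A direct expansion gives
\[
g(\epsilon)=J(u_h)+\epsilon\Bigl(a(u_h,v)-\int_\Omega\int_D f(x)v(x,\omega)\,\mathrm{d}x\mathrm{d}P\Bigr)+\tfrac{\epsilon^2}{2}\,a(v,v).
\]
Since $g$ attains its minimum at $\epsilon=0$, we have $g'(0)=0$, which is exactly \eqref{ortho}. Here the symmetry of $a(\cdot,\cdot)$ is used to combine the two cross terms $a(u_h,v)$ and $a(v,u_h)$.

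For the reverse implication \eqref{ortho} $\Rightarrow$ \eqref{fem}, take any $v\in V_h$ and write $v=u_h+w$ with $w=v-u_h\in V_h$. The same expansion yields
\[
J(v)=J(u_h)+\Bigl(a(u_h,w)-\int_\Omega\int_D f(x)w(x,\omega)\,\mathrm{d}x\mathrm{d}P\Bigr)+\tfrac{1}{2}\,a(w,w).
\]
By \eqref{ortho} the middle term vanishes, and the last term satisfies $a(w,w)\geq 0$ by \eqref{ellipticity}. Hence $J(v)\geq J(u_h)$, establishing \eqref{fem}.

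There is no real obstacle in this argument; the proof is essentially an exercise in recognizing that $J$ is a symmetric quadratic functional on the Hilbert space $L^2(H^1_0(D),\Omega)$ restricted to the linear subspace $V_h$. The only points that deserve a sentence of comment are (i) that all integrals are well defined because $u_h,v\in V_h\subset L^2(H^1_0(D),\Omega)$ together with \eqref{ellipticity} and $f\in L^2(D)$ ensure integrability via Cauchy--Schwarz and \eqref{continuity}, and (ii) that $V_h$ being a linear subspace (rather than merely a convex set) is what allows the variation $\epsilon v$ to range over all of $R$, yielding an equality in \eqref{ortho} rather than only an inequality.
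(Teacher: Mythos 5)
Your proof is correct and is precisely the standard quadratic-expansion argument that the paper implicitly relies on: the theorem is stated without proof in the paper, being the classical equivalence of energy minimization and Galerkin orthogonality for a symmetric, coercive bilinear form on a linear subspace. Both directions, including the use of symmetry for $g'(0)=0$ and of ellipticity \eqref{ellipticity} for $a(w,w)\geq 0$, are handled exactly as one would expect.
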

The numerical solution defined in this way has the following property.
\begin{Thm}[Quasi-Optimality]
\begin{equation} 
\label{optimality}
\|u(x,\omega)-u_h(x,\omega)\|_{L^2(H^1_0(D),\Omega)}\leq \lambda_{\max}/\lambda_{\min}\inf_{v\in V_h}\|u(x,\omega)-v(x,\omega)\|_{L^2(H^1_0(D),\Omega)}.
\end{equation}
\end{Thm}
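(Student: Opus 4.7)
The plan is to apply the classical Céa's Lemma argument, transported verbatim to the stochastic setting using the norm on $L^2(H^1_0(D),\Omega)$. Introduce the bilinear form
\[
B(u,v)=\int_\Omega\int_D \nabla u(x,\omega)^T a(x,\omega)\nabla v(x,\omega)\,\mathrm{d}x\,\mathrm{d}P,
\]
so that \eqref{solu} reads $B(u,\phi)=\langle f,\phi\rangle$ for all $\phi\in L^2(H^1_0(D),\Omega)$ and \eqref{ortho} reads $B(u_h,v)=\langle f,v\rangle$ for all $v\in V_h$.

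The first step is to establish the two structural properties of $B$ that follow pointwise from the uniform ellipticity assumption \eqref{ellipticity}. Coercivity,
\[
B(u,u)\geq \lambda_{\min}\|u\|_{L^2(H^1_0(D),\Omega)}^2,
\]
comes from the lower bound $a(x,\omega)\geq \lambda_{\min}$ inside the integrand. Continuity,
\[
|B(u,v)|\leq \lambda_{\max}\|u\|_{L^2(H^1_0(D),\Omega)}\|v\|_{L^2(H^1_0(D),\Omega)},
\]
comes from $a(x,\omega)\leq \lambda_{\max}$ combined with the Cauchy--Schwarz inequality applied first on $\mathbb{R}^d$ in the gradient pairing and then on $D\times\Omega$ in the integral.

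The second step is Galerkin orthogonality. Since $V_h\subset L^2(H^1_0(D),\Omega)$, I can subtract \eqref{ortho} from \eqref{solu} tested against any $v\in V_h$ to obtain
\[
B(u-u_h,v)=0,\qquad \forall v\in V_h.
\]

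The third step is the usual chain of inequalities. For any $v\in V_h$, write $u-u_h=(u-v)+(v-u_h)$, where $v-u_h\in V_h$, apply coercivity on the left and orthogonality to discard the $V_h$-component on the right, then apply continuity:
\[
\lambda_{\min}\|u-u_h\|^2\leq B(u-u_h,u-u_h)=B(u-u_h,u-v)\leq \lambda_{\max}\|u-u_h\|\,\|u-v\|,
\]
with all norms in $L^2(H^1_0(D),\Omega)$. Dividing by $\lambda_{\min}\|u-u_h\|$ and taking the infimum over $v\in V_h$ yields \eqref{optimality}. There is no substantive obstacle here: the only thing to double-check is that the continuity/coercivity constants genuinely are $\lambda_{\max}$ and $\lambda_{\min}$, which reduces to noting that the $L^2(H^1_0(D),\Omega)$ norm is defined precisely via the $H^1_0$ seminorm $\int|\nabla\cdot|^2$ integrated against $\mathrm{d}P$, so the scalar ellipticity bounds pass directly through.
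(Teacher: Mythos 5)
Your proof is correct: the paper states this theorem without giving a proof, and your argument is exactly the standard C\'ea's lemma reasoning (coercivity with constant $\lambda_{\min}$, continuity with constant $\lambda_{\max}$, Galerkin orthogonality from $V_h\subset L^2(H^1_0(D),\Omega)$) that the authors clearly intend, with the constants matching the stated bound $\lambda_{\max}/\lambda_{\min}$. No gaps.
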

If we choose the finite dimensional subspace of $L^2(H^1_0(D),\Omega)$ to be of the form~\eqref{trialspace}, i.e., the trial space $V_h$ is chosen to be the heterogeneous coupling of spatial basis with local stochastic basis, then we get the following HSFEM framework for solving linear stochastic elliptic equations. 
\begin{center}
\fbox{\parbox{0.98\textwidth}{
\begin{center} {\bf The Heterogeneous Finite Element Method Framework}
\end{center}
\begin{itemize}
\item Construct the local stochastic basis $\xi_i^j(\omega)$, $i=1,\dots, n$, $j=0,\dots, k_i$, which can capture the stochastic behavior of the solution near $x_i$. Let $\xi_i^0(\omega)=1$.
\item Find $c_i^j, i=1,\dots, n, j=0,\dots, k_i$, such that for all $i'=1,\dots, n$, $j'=0,\dots, k_{i'}$,
\begin{equation*}
	\int_\Omega\int_D\sum_{i=1}^n\sum_{j=0}^{k_i}c_i^j\xi_i^j\nabla\phi_i(x)^Ta(x,\omega)\xi_{i'}^{j'}\nabla\phi_{i'}(x)\mathrm{d}x\mathrm{d}P=\int_\Omega\int_D f(x)\xi_{i'}^{j'}\phi_{i'}(x)\mathrm{d}x\mathrm{d}P.
\end{equation*}
The corresponding linear system is uniquely solvable if for each $i$, $\xi_i^j(\omega)$, $j=0,\dots, k_i$ are linearly independent, i.e.,
\[\sum_{j=0}^{k_i} c_j\xi_i^j(\omega)=0\quad \Leftrightarrow\quad  c_j=0.\]

\item Recover the numerical solution as
\begin{equation*}
	u_h(x,\omega)=\sum_{i=1}^n\sum_{j=0}^{k_i}c_i^j\phi_i(x)\xi_i^j(\omega).
\end{equation*}
\item {\em A posteriori} error estimation and correction.
\end{itemize}}}
\end{center}

The novelty of the HSFEM framework is that the equation is discretized in the stochastic and spatial directions simultaneously using the heterogeneous coupling of spatial basis with local stochastic basis, and this allows for spatially heterogeneous stochastic structure of the solution space. Given this HSFEM framework, a suitable local stochastic basis still need to be chosen. Different choices of $\{\xi_i^j(\omega)\}$ lead to different numerical formulations of equation~\eqref{targ}. For example, if $\xi_i^j(\omega)$ are chosen to be the orthonormal polynomials $H_\alpha(\omega)$ in the PC methods for all $i$, then the HSFEM is equivalent to the PC \cite{babuska2004galerkin, xiu2002wiener} methods. This implies that the HSFEM framework is a generalization of the PC methods using {\bf problem-dependent} and {\bf local} stochastic basis to approximate the solution space. If the stochastic operator enjoys the (strong) Operator-Sparsity and a suitable local stochastic basis $\{\xi_i^j(\omega)\}$ is chosen, then based on \eqref{sparsity} and \eqref{optimality}, this HSFEM framework can attain high accuracy using only a small number of coupling basis functions $S=\sum_i^n k_i+n$. So the HSFEM framework provides a novel direction to attack the {\em curse of dimensionality} by exploiting local stochastic structure of the solution space.  

In the following section, we provide a sampling method to construct the local stochastic basis $\{\xi_i^j(\omega)\}$ using the randomized range finding techniques \cite{halko2011finding}. The constructed local stochastic basis functions are close to optimal in capturing the local stochastic behavior of the solution space in the $L^2(\Omega)$ sense if the singular values in the KL expansion of the Green's function decay fast.

\section{A Sampling Method to Construct the Local Stochastic Basis}
\label{samplingsection}
In this section we first introduce the compact linear operator $T_i$, which maps the forcing $f(x)$ to the stochastic part of the solution at node point $x_i$. We apply the randomized range finding techniques to $T_i$ to construct the local stochastic basis. Numerical implementation details are given.
\subsection{The Linear Compact Operator $T_i$}
\label{Sampling}
Since the local stochastic basis functions $\xi_i^j(\omega), j=1,\dots, k_i$ are used to approximate the behavior of the solution in the stochastic direction near $x_i$, we consider the following linear operator that maps the forcing function to the stochastic part of the solution at $x_i$: 
\begin{equation}
	\label{Ti}
	T_i: \quad f(x)\quad \mapsto \quad u(x_i, \omega)-E[u(x_i,\omega)].
\end{equation}
In dimension $d\leq 3$, $T_i$ is actually a compact linear operator mapping from $L^2(D)$ to $L^2(\Omega)$, whose singular value decomposition has already been given by \eqref{compactevi},
\begin{equation*}
	T_i f(y)=\sum_{j=1}^{\infty}\sigma_i^j\eta_i^j(\omega)\int_D \psi_i^j(y)f(y)\mathrm{d}y,
\end{equation*}
where $\sigma_i^j$, $\eta_i^j(\omega)$ and $\psi_i^j(x)$ come from the KL expansion of the Green's function~\eqref{KLgreen}. 
The subspace of $L^2(\Omega)$ spanned by the first $k$ left singular vectors of $T_i$ 
\begin{equation}
	\label{optimall2}
	{\rm span}\{\eta_i^1(\omega),\eta_i^2(\omega),\dots, \eta_i^k(\omega)\}
\end{equation}
is the best $k$ dimensional approximation of the range of $T_i$ in the $L^2(\Omega)$ sense. So $\eta_i^j(\omega)$, $j=1,\dots, k_i$, are good candidates for the local stochastic basis functions $\xi_i^j(\omega)$, $j=1,\dots, k_i$. 

Based on our analysis of the Green's function in section~\ref{interpretation}, we have $\sum_{j=1}^\infty(\sigma_i^j)^2<C_i^2$. So to obtain $\epsilon$ accuracy in approximating $T_i$ in the $L^2(\Omega)$ sense, at most $[C_i^2/\epsilon^2]$ (\ref{worst}) stochastic basis functions are required, which depends on the ellipticity of the operator, but is independent of the dimension of $\omega$. 

However, it is impractical to construct ${\rm span}\{\eta_i^1(\omega),\eta_i^2(\omega),\dots,\eta_i^k(\omega)\}$ by doing KL expansion to the Green's function. In this section we develop a sampling method using the randomized range finding algorithm to construct the local stochastic basis, which can capture the main action of $T_i$ and is an approximation of \eqref{optimall2}. The basic idea is using $T_i$ to act on some random matrix $\Omega$, then the main action of $T_i$ can be extracted through an orthogonalization process from the image $T_i\Omega$.
\subsection{Discretization of $T_i$}
We first discretize the domain of $T_i$, $L^2(D)$ using a set of orthonormal basis functions
\begin{equation}
	\label{basis}
	\hat D=\{\Phi_1(x), \Phi_2(x)\dots,\Phi_N(x) \}.
\end{equation} 

In our numerical examples in section~\ref{examples}, the domain $D$ is chosen to be $[0,1]^d$, and we discretize $D$ using uniform mesh of size $h$. $\hat D$ is chosen to be the first $N=(2l+1)^d$ Fourier modes with $l=\frac{1}{2h}$, i.e.,
\begin{equation}
	\hat{D}=\otimes_{i=1}^d\{1,\dots, 2\sin(2\pi lx_i), 2\cos(2\pi lx_i)\},
	\label{discretizeL2D}
\end{equation}
which contains all the Fourier modes that can be resolved by the given mesh.

\begin{Rem}
	$L^2(D)$ is an infinite dimensional space and cannot be approximated using finite basis functions. However, in this paper we seek to construct finite dimensional space $V_h$~\eqref{trialspace} to approximate the solution space to~\eqref{targ}. Assuming now the constructed space $V_h$ can make \eqref{sparsity} hold for all $f(x)\in \hat D$, i.e., for any $f(x)\in{\hat D}$, there exists $u_h(x,\omega)\in V_h$, such that 
\begin{equation*}
	\|u(x,\omega)-u_h(x,\omega)\|_H\leq\epsilon\|f(x)\|_{L^2(D)}.
\end{equation*}
where $\|\cdot\|_H$ can be either $L^2(D\times\Omega)$ or $L^2(H^1_0(D),\Omega)$ norm. Then for any $f(x)\in L^2(D)$, we have $f(x)=f^1(x)+f^2(x)$, where $f^1(x)\in \hat D$ and $f^2(x)\in \hat D^{\perp}$. $\hat D^{\perp}$ is the orthogonal complement of $\hat D$ in $L^2(D)$ and consists of high frequency Fourier modes. Then for $f^2(x)\in \hat D^{\perp}$, $\|f^2(x)\|_{H^{-1}(D)}\leq C/l\|f^2(x)\|_{L^2(D)}\leq Ch\|f^2(x)\|_{L^2(D)}$ according to \eqref{discretizeL2D} and the fact that we have chosen $l=1/2h$. 

The solution $u(x,\omega)$ can be divided into $u(x,\omega)=u^1(x,\omega)+u^2(x,\omega)$ correspondingly, and there exists $u^1_h(x,\omega)\in V_h$ such that $\|u^1(x,\omega)-u^1_h(x,\omega)\|_H\leq \epsilon \|f^1(x)\|_{L^2(D)}$. Then
\begin{equation*}
	\|u(x,\omega)-u^1_h(x,\omega)\|_{H}\leq \epsilon\|f^1(x)\|_{L^2(D)}+\|u^2(x,\omega)\|_H,
\end{equation*}
and according to \eqref{continuity}, $\|u^2(x,\omega)\|_H\leq C\|f^2(x)\|_{H^{-1}(D)}\leq Ch\|f^2(x)\|_{L^{2}(D)}$. Then
\begin{equation}
	\label{enough}
	\|u(x,\omega)-u^1_h(x,\omega)\|_H\leq (\epsilon+Ch)\|f(x)\|_{L^2(D)}\leq C\epsilon\|f(x)\|_{L^2(D)},
\end{equation}
where we have assumed $h$ is small ($h\leq \epsilon$) in the second inequality. \eqref{enough} implies that when constructing trial space $V_h$ to approximate the solution space of \eqref{targ} for $f(x)\in L^2(D)$, we do not need to consider the high frequency part of $L^2(D)$ that cannot be resolved by the given mesh.
\end{Rem}
With this discretization of $L^2(D)$, we define $P_1$ as the mapping from $R^N$ to $\hat D$,
\begin{equation}
	P_1 v=\sum_{i=1}^Nv_i\Phi_i(x),\quad \text{for}\quad v=[v_1,v_2,\dots, v_N]^T.
	\label{P}
\end{equation}
$P_1$ is an isometry because we have chosen $\Phi_i(x)$ to be orthonormal.

Then we consider discretizing the range of $T_i$ using a set of orthonormal functions in $L^2(\Omega)$. For example, we can use orthonormal polynomials,
\begin{equation}
	\label{disrange}
	\{H_1(\omega), H_2(\omega),\dots, H_M(\omega)\}.
\end{equation}
Then we define the projection operator $P_2$ that maps from $L^2(\Omega)$ to $R^M$,
\begin{equation}
	\label{P2}
	P_2 H(\omega)=(u_1,\dots, u_M)^T,\quad {s.t.} \quad \forall j, \quad \left( H(\omega)-\sum_{i=1}^Mu_iH_i(\omega)\right)\perp H_j(\omega).
\end{equation}

With the discretization of $L^2(D)$ and $L^2(\omega)$ using $P_1$ and $P_2$, $T_i$ is discretized to 
\begin{equation}
	\label{hatti}
	\hat T_i=P_2T_i P_1,
\end{equation}
which is a linear map from $R^N$ to $R^M$ thus is a matrix. $M$ and $N$ should be large to make sure the discretization error introduced in \eqref{discretizeL2D} and \eqref{disrange} is small. 

We want to approximate the range of $\hat T_i$, which means finding a matrix $Q_i$ with orthonormal columns, such that $\|(I-Q_iQ_i^T)\hat T_i\|$ in the operator norm is small. Note that $Q_iQ_i^T$ is the projection operator to the column space of $Q_i$. Once we get $Q_i=(q_i^{ls})$, which is a $M\times k_i$ matrix, we can construct the corresponding stochastic basis $\xi_i^j(\omega)$ approximating the range of $T_i$ based on the operator $P_2$,
\begin{equation*}
	\xi_i^j(\omega)=\sum_{l=1}^M H_l(\omega)q_i^{lj}, \quad j=1,\dots, k_i,
\end{equation*}
where $H_l(\omega)$ are the orthonormal basis functions in $L^2(\Omega)$ \eqref{disrange}. 

We apply the randomized range finding algorithms \cite{halko2011finding,liberty2007randomized} to $\hat T_i$ to construct the $Q_i$. 
\subsection{The Randomized Range Finding Algorithms}
Given a large matrix $A=\hat T_i$ and a tolerance $\epsilon$, we want to construct a matrix $Q$ with orthonormal columns, whose column space can capture the main action of $A$, namely (in operator norm)
\begin{equation}
	\label{range}
	\|(I-QQ^T)A\| \leq \epsilon,
\end{equation}
where $I-QQ^T$ is the projection operator to the orthogonal complement of the column space of $Q$.

The basic idea of the randomized range finding algorithm is using the operator $A$ to act on a random matrix $\Omega$ and extracting the main action of $A$ from the image $A\Omega$. The following lemma~\cite{halko2011finding} can be used {\em a posteriori} to confirm that \eqref{range} holds (with high probability).
\begin{Lem}
	\label{posteri}
Let $B$ be a real $m\times n$ matrix. Fix a positive integer $r$ and a real number $\alpha>0$. Draw an independent family of standard Gaussian vectors
\[\{\omega^{(i)}:i=1,2\dots, r\}.\]
Then
\begin{equation}
	\|B\|\leq \alpha\sqrt{\frac{2}{\pi}}\max_{i=1,\dots, r}\|B\omega^{(i)}\|
	\label{aposteriori}
\end{equation}
except with probability $\alpha^{-r}$.
\end{Lem}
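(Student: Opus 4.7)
The plan is to reduce the bound on $\|B\|$ to a statement about the size of a standard Gaussian scalar, via the rotational invariance of the Gaussian distribution. First, I would take the SVD $B = U\Sigma V^T$ and let $\sigma_1 = \|B\|$ be the top singular value. Because the standard Gaussian distribution on $R^n$ is invariant under orthogonal transformations, $V^T \omega^{(i)}$ is again a standard Gaussian vector, so if I write its coordinates as $g^{(i)}_1, g^{(i)}_2, \ldots$, these are i.i.d.\ $N(0,1)$ and
\begin{equation*}
\|B\omega^{(i)}\|^2 \;=\; \|\Sigma V^T \omega^{(i)}\|^2 \;=\; \sum_{j}\sigma_j^2 \bigl(g^{(i)}_j\bigr)^2 \;\geq\; \sigma_1^2 \bigl(g^{(i)}_1\bigr)^2 \;=\; \|B\|^2 \bigl(g^{(i)}_1\bigr)^2.
\end{equation*}
Taking the maximum over $i = 1, \dots, r$ gives $\max_i \|B\omega^{(i)}\| \geq \|B\|\,\max_i |g^{(i)}_1|$, and rearranging we see that the desired inequality \eqref{aposteriori} will hold provided
\begin{equation*}
\max_{i=1,\dots,r} |g^{(i)}_1| \;\geq\; \sqrt{\pi/2}\,/\alpha.
\end{equation*}

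The second step is a tail estimate for the minimum of $r$ independent half-Gaussians. For a single standard Gaussian scalar $g$ and any $t>0$, the density $\sqrt{2/\pi}\,e^{-x^2/2}$ is bounded by $\sqrt{2/\pi}$, so $P(|g|<t) \leq t\sqrt{2/\pi}$. Applying this to each $g^{(i)}_1$ with $t = \sqrt{\pi/2}/\alpha$ yields $P(|g^{(i)}_1|<t) \leq 1/\alpha$, and independence across $i$ upgrades this to
\begin{equation*}
P\!\left(\max_{i=1,\dots,r}|g^{(i)}_1| < \sqrt{\pi/2}/\alpha\right) \;\leq\; \alpha^{-r}.
\end{equation*}
On the complementary event, the computation in the first paragraph gives $\|B\| \leq \alpha\sqrt{2/\pi}\,\max_i \|B\omega^{(i)}\|$, which is exactly \eqref{aposteriori}.

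I do not anticipate a serious obstacle here: the argument is essentially two lines of linear algebra plus an elementary Gaussian density bound. The only place that requires a little care is the reduction from a matrix norm to a single Gaussian scalar; the key trick is to notice that, after SVD, one only needs to keep the coordinate of $V^T \omega^{(i)}$ aligned with the top singular direction, and by rotational invariance this coordinate is a genuine $N(0,1)$ random variable, independent across the $r$ draws. Everything else is immediate. The rest of the paper will use this lemma to certify, a posteriori, that the randomly constructed $Q_i$ indeed captures the range of $\hat T_i$ up to tolerance $\epsilon$.
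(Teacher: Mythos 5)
Your proof is correct. Note that the paper does not actually prove this lemma — it simply quotes it from \cite{halko2011finding} — and your argument (reduce to the top singular direction via the SVD and rotational invariance, then bound the small-ball probability of a standard Gaussian by $t\sqrt{2/\pi}$ and use independence across the $r$ draws) is essentially the standard proof given in that reference, so there is nothing to correct or compare further.
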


\begin{algorithm}
	\caption{Adaptive Randomized Range Finder}
	\label{rfinder}
	\begin{algorithmic}[1]
		\State Draw standard Gaussian vectors $\omega^{(1)},\dots, \omega^{(r)}$ of length $N$.
		\State For $i=1,2\dots, r$ compute $y^{(i)}=A\omega^{(i)}$.
		\State $j=0$.
		\State $Q^{(0)}=[]$, the $m\times 0$ matrix.
		\While{$\max\{\|y^{(j+1)}\|,\|y^{(j+2)}\|,\dots,\|y^{(j+r)}\|\}>\epsilon/(10\sqrt{2/\pi})$}
		\State $j=j+1$.
		\State Overwrite $y^{(j)}$ by $(I-Q^{(j-1)}(Q^{(j-1)})^T)y^{(j)}$.
		\State $q^{(j)}=y^{(j)}/\|y^{(j)}\|$.
		\State $Q^{(j)}=[Q^{(j-1)} q^{(j)}]$.
		\State Draw a standard Gaussian vector $w^{j+r}$ of length $n$.
		\State $y^{(j+r)}=(I-Q^{(j)}(Q^{(j)})^T)A\omega^{(j+r)}$.
		\For{$i=(j+1),(j+2),\dots,(j+r-1)$,}
		\State Overwrite $y^{i}$ by $y^{(i)}-q^{(j)}\langle q^{(j)},y^{(i)}\rangle$.
		\EndFor
		\EndWhile
		\State $Q=Q^{(j)}$
	\end{algorithmic}
\end{algorithm}

Algorithm \ref{rfinder} \cite{halko2011finding} produces an orthonormal $Q$ such that \eqref{range} holds with probability at least $1-\min\{M,N\}10^{-r}$.  The $Q$ is obtained by Gram-Schmidt orthogonalization in Step $7, 8, 9$ to the image $A\Omega$. Lemma~\ref{posteri} is used in Step $5$ with $B=(I-QQ^T)A$, $\omega^{(i)}=\omega^{j+i}$, $i=1,\dots, r$ to check whether the returned $Q$ makes \eqref{range} hold. The error estimation in this Algorithm is almost free since the computed $(A-Q^TQA)\omega^{j+1}$ can be put back in $Q$ after normalization if \eqref{aposteriori} is not satisfied. Another strategy to extract the main action of $A$ from the image $A\Omega$ is by Singular Value Decomposition as \cite{lin2011fast}. This strategy together with error estimation using Lemma~\ref{range} leads to Algorithm~\ref{localbasis}, which produces an orthonormal $Q$ such that \eqref{range} holds with probability at least $1-2\min\{M,N\}10^{-r}$. In Step $9, 10, 11$, the first several left singular vectors of $W=A\Omega$ are selected to make \eqref{posteri} hold. If all the left singular vectors of $W$ cannot make \eqref{posteri} hold in Step 10, $W$ will be enriched in Step $15$.

Algorithm~\ref{localbasis} is more expensive than Algorithm~\ref{rfinder} because of the SVD process, which might need to be done several times. But based on our numerical tests, Algorithm~\ref{localbasis} in general returns a relatively smaller $Q$ because of the SVD procedure. We actually implement both Algorithms in the HSFEM framework, and their performances are very similar. For clarity we use Algorithm~\ref{localbasis} to demonstrate the implementation and efficiency of our method for the rest of this paper. We remark that the estimate~\eqref{aposteriori} is pessimistic\cite{halko2011finding}, so in practice a small $r$ in the {\em a posteriori } estimation is enough.

\begin{algorithm}
	\caption{Construct An Orthonormal Matrix $Q$ to Approximate the Range of A}
	\label{localbasis}
	\begin{algorithmic}[1]
		\State Draw standard Gaussian vectors $\omega^{(1)},\dots, \omega^{(r)}$ of length $N$.
		\State For $i=1,\dots, r$ compute $y^{(i)}=A\omega^{(i)}$. $Y^{(i)}=y^{(i)}$.
		\State $Q=[]$. 
		\State Draw Gaussian matrix $\Omega$ of size $N\times K$. $W=A\Omega$.
		\While {$\max\{\|Y^{(1)}\|,\|Y^{(2)}\|,\dots,\|Y^{(r)}\|\}>\epsilon/(10\sqrt{2/\pi})$}
		\State Do singular value decomposition to $W$. $W=\sum_{i=1}^K\sigma_iU_i V_i^T$ with $\sigma_1\geq \sigma_2\geq \dots\geq \sigma_K$.
		\State $\gamma=1$.
		\While {$\gamma\leq K$}

		\State For $i=1,\dots, r$ compute $Y^{(i)}=Y^{(i)}-U_\gamma U_\gamma^TY^{(i)}$.
\If {$\max\{\|Y^{(1)}\|,\|Y^{(2)}\|,\dots,\|Y^{(r)}\|\}<\epsilon/(10\sqrt{2/\pi})$}

		\State Let $Q=[U_1,U_2,\dots, U_\gamma$].
		\Return
		\EndIf
		\State $\gamma=\gamma+1$.
		\EndWhile
		\State Draw Gaussian $\hat \Omega$ of size $N\times K$. $W=[W,A\hat \Omega]$.
		\State $K=2K$.
		\State For $i=1,\dots, r$, $Y^{(i)}=y^{(i)}$.
		\EndWhile
	\end{algorithmic}
\end{algorithm}

\subsection{Implementation Details of the Sampling Method}
\label{samplingdetail}
We apply Algorithm~\ref{localbasis} to the operator $\hat T_i$, $i=1,\dots, n$ to construct the local stochastic basis. Note that $T_i$ is defined as a solution operator to equation~\eqref{targ}, so we do not have direct access to each entry of $\hat T_i$. However, for any $v=(v_1,\dots, v_N)^T\in R^N$, based on \eqref{P}, \eqref{hatti} and \eqref{Ti}, we can compute $\hat T_iv$ by solving equation~\eqref{targ} using the forcing function
\[f(x)=\sum_{i=1}^n v_i\Phi_i(x).\]
Denote the solution as $u(x,\omega)$, then we have
\begin{equation}
\hat T_iv=P_2\left(u(x_i,\omega)-E[u(x_i,\omega)]\right),
\label{mvmultiply}
\end{equation}
where $P_2$ is defined in \eqref{P2}, and this means we have access to matrix-vector multiplication of $\hat T_i$. 

Fortunately, Algorithm \ref{localbasis} does not require access to each entry of $A=\hat T_i$. It only involves using the operator to act on a random matrix $\hat\Omega$, which we have access to based on \eqref{mvmultiply}, and doing singular value decomposition to the image. We never need to implement the projection operator $P_2$ in practice. Instead we view the columns of the image $W=A\Omega$ as $L^2(\Omega)$ vectors and do Singular Value Decomposition in Step $6$ and Gram-Schmidt orthogonalization in  Step $9$ according to the $L^2(\Omega)$ inner product. 

To avoid doing SVD more than once, we choose the initial $K$ in Step $4$ large enough, such that the main action of $A$ can be captured in $W$ in Step $4$, i.e., $W$ does not need to be enriched in Step $15$. The implementation details of this sampling method are given below:
\begin{enumerate}
	\item Generate a Gaussian matrix $\Omega$ of size $N\times r$, and get the corresponding $r$ forcing functions based on the operator $P_1$,
		\[f^p(x)=\sum_{q=1}^N\Omega(q,p)\Phi_q(x),\quad p=1,\dots, r.\]
		Solve Equation \eqref{targ} using forcing $f^p(x)$, $p=1,\dots, r$, and denote the solutions as $u^{p}(x,\omega)$. Let
		\[Y^p_i(\omega)=u^p(x_i,\omega)-E[u^p(x_i,\omega)].\]
	\item Generate a Gaussian matrix $\Omega$ of size $N\times K$, and get the corresponding $K$ forcing functions based on the operator $P_1$,
		\[f_p(x)=\sum_{q=1}^N\Omega(q,p)\Phi_q(x),\quad p=1,\dots, K.\]
		This means we are sampling each operator $\hat T_i$ using the same random matrix $\Omega$. Solve equation \eqref{targ} with forcing $f_p(x)$, $p=1,\dots, K$, and denote the solutions as $u_p(x,\omega)$. Let 
		\begin{equation*}
			y_i^{(p)}(\omega)=u_p(x_i,\omega)-E[u_p(x_i,\omega)], \quad i=1,\dots, N, \quad p=1,\dots, K.
		\end{equation*}
	\item For each node point $x_i$, compute the $K\times K$ matrix $C_i$ as
		\begin{equation*}
			C_i(p,q)=\int_\Omega y_i^{(p)}(\omega)y_i^{(q)}(\omega)\mathrm{d}P,
		\end{equation*}
		which is symmetric and positive semi-definite.
	\item For each $i$, compute the eigen-decomposition of $C_i$, which is
		\begin{equation*}
			C_i=\sum_j^K \lambda_i^{j}V_i^j(V_i^j)^T,
		\end{equation*}
		where $V_i^j$ are $K\times 1$ matrix. $C_i$ can be decomposed efficiently if $K$ is small.
\item Denote the $p$-th entry of $V_i^j$ as $V_i^j(p)$, then we get the stochastic basis functions for each node $x_i$. 	
		\begin{equation*}
			\xi_i^j(\omega)=\sqrt{\frac{1}{\lambda_i^j}}\sum_{l=1}^K V_i^{j}(p)y_i^{(p)}(\omega),\quad j=1,\dots, K.
		\end{equation*}

	\item For each $i$, find the smallest $\gamma=k_i$ such that for all $p=1,\dots, r$,
		\begin{equation*}
			\|Y_i^p(\omega)-\sum_{j=1}^\gamma\int_\Omega Y_i^p(\omega)\xi_i^j(\omega)\mathrm{d}P\xi_i^j(\omega)\|\leq \epsilon/(10\sqrt{2/\pi}).
\end{equation*}
			Then the local stochastic basis functions $\xi_i^j(\omega)$ at node $x_i$ are
			\begin{equation*}
				\xi_i^j(\omega),\quad j=1,\dots, k_i.
			\end{equation*}
		They have mean $0$, variance $1$, and are orthogonal to each other.
\end{enumerate}

\begin{Rem} 
The constructed local stochastic basis functions are not exactly the first several left singular vectors of $T_i$, but to obtain certain accuracy in $L^2(\Omega)$, the number of returned local stochastic basis functions is close to the optimal $k_i$ if the singular values of $T_i$ decay fast \cite{halko2011finding}. 
\end{Rem}
\begin{Rem}
	Following the same argument as Lemma~\ref{nouse}, we know that there is high probability that the trial space $V_h$ constructed using the sampling method has the following approximation property,
	\begin{equation}
		\inf_{v(x,\omega)\in V_h}\|u(x,\omega)-v(x,\omega)\|_{L^2(D\times\Omega)}\leq C\epsilon.
		\label{trialspaceproperty}
	\end{equation}
	Note that according to \eqref{optimality}, the HSFEM searches the best approximation of the solution within the trial space $V_h$ in $L^2(H^1_0(D),\Omega)$, not $L^2(D\times\Omega)$. So the approximation property~\eqref{trialspaceproperty} cannot guarantee convergence of the numerical solution in $L^2(D\times\Omega)$. Convergence analysis of the HSFEM framework will be given in our future work. Numerical results in section~\ref{examples} suggest that the local stochastic basis constructed using the sampling method works well within the HSFEM framework.
\end{Rem}

\section{Numerical Implementation of the HSFEM}
\label{implementationsection}
In this section, we address several issues concerning the implementation of the HSFEM. We first summarize the outline of the whole algorithm and estimate the main computational cost. Methods to reduce the offline computational cost and an online error estimation and correction procedure are given. Then we discuss two ways of discretization in the stochastic direction. The HSFEM is parallel in nature and can be easily implemented on a parallel machine to attain more computational savings.

\subsection{Outline of the Whole Method and Main Computational Cost}
The HSFEM involves two stages: the offline stage and the online stage. In the offline stage, we construct the local stochastic basis by sampling the operator using randomly generated forcing functions for a number of times and form the stiffness matrix. In the online stage, we solve equation~\eqref{targ} efficiently for multiple forcing functions using the coupling basis constructed from the offline stage. 

The {\bf offline stage} involves the following procedures:
\begin{itemize}
	\item Construct the local stochastic basis functions using Algorithm \ref{localbasis}, and denote them as 
		\[\xi_i^j(\omega), \quad i=1,\dots, n, \quad j=0,\dots, k_i.\]
		The implementation details are given in the previous section.

	\item Construct the coupling basis $\{\phi_i(x)\xi_i^j(\omega)\}$ and compute the stiffness matrix $SM$. 
			\begin{itemize}	
			\item Let $S$ be the number of coupling basis functions $S=\sum_{i=1}^nk_i+n$, then $SM$ is of size $S\times S$. 
		
			\item Let $R$ be the relabeling function that maps each pair $(i,j), i=1,\dots, n,\ j=0,\dots, k_i$ to the global index of the coupling basis function $\phi_i(x)\xi_i^j(\omega)$: $R(i,j)=\sum_{l=1}^{i-1}(k_l+1)+j+1$.
	\item Compute the stiffness matrix as
		\begin{equation}
			SM(R(i_1,j_1),R(i_2,j_2))=\int_\Omega\int_D\xi_{i_1}^{j_1}(\omega)\nabla \phi_{i_1}(x)^Ta(x,\omega)\xi_{i_2}^{j_2}(\omega)\nabla \phi_{i_2}(x)\mathrm{d}x\mathrm{d}P.
			\label{smatrix}
		\end{equation}

		Since $\phi_i(x)$ has only local support, the stiffness matrix is sparse.
	\end{itemize}
\end{itemize}

The offline stage could be quite expensive because in constructing the local stochastic basis, we need to solve equation~\eqref{targ} with randomly chosen forcing functions using traditional methods for $K$ times and do singular value decomposition to $\hat T_i\hat \Omega$, which has $K$ columns. However, if the singular values of operator $T_i$~\eqref{Ti} decay fast, a small $K$ is enough for the construction of the local stochastic basis. Our numerical results in section~\ref{examples} demonstrate that this indeed holds for elliptic operators with several types of stochastic input. For $K$ small, the {\bf main computational cost} in the offline stage comes from sampling the stochastic operator $K$ times. It is of order
\begin{equation}
	\label{offcost}
	O(KMn^2),
\end{equation}
where $M$ is the number of sampling points (we use the SC or MC method in our numerical examples in section~\ref{examples}). On each sampling point, we need to solve a deterministic elliptic equation using $\phi_i(x)$. We have assumed the computational cost of solving a sparse $n\times n$ linear system is $O(n^2)$ in \eqref{offcost}.

In the {\bf online stage} we consider solving the equation with $F$ different forcing functions: 
\begin{itemize}
	\item For each forcing function $f(x)$, construct the load  vector $b$, which is of size $S\times 1$.
		\begin{equation}
			\label{loadvector}
			b(R(i,j))=\int_\Omega\int_D \xi_i^j(\omega)\phi_i(x)f(x)\mathrm{d}x\mathrm{d}P.
		\end{equation}
		The local stochastic basis constructed using the sampling method in the previous section has mean $0$, so only the $R(i,0)$th entries, $i=1,\dots, n$, of the load vector $b$ are non-zero.
	\item Solve $c_i^j$ from the linear system
		\begin{equation}
		\label{system}
		SM\times c=b.
		\end{equation}
		Then the numerical solution is
		\begin{equation}
			u_h(x,\omega)=\sum_{i=1}^n\sum_{j=1}^{k_i}c_i^j\phi_i(x)\xi_i^j(\omega).
			\label{finalsolu}
		\end{equation}
	\item Compute the quantities of interest based on the numerical solution $u_h(x,\omega)$. 
		
		Since the local stochastic basis functions at each node are orthonormal, we can compute the mean and variance of the solution efficiently without assembling the local stochastic basis functions. 
		\[E[u(x_i,\omega)]=c_i^0, \quad \sigma^2[u(x_i,\omega)]=\sum_{j=1}^{k_i}(c_i^j)^2.\]
\end{itemize}

In the online stage, the main computation cost comes from solving the linear system~\eqref{system}. If the average number of stochastic basis functions for each node, $k$, is small, then the linear system~\eqref{system} is small ($kn\times kn$) and sparse, thus can be solved efficiently. The total online computational cost is of order
\begin{equation}
	\label{oncost}
	O(Fk^2n^2),
\end{equation}
in which we again assume sparse linear system of size $kn\times kn$ can be solved in $O(k^2n^2)$ operations. 

Combining \eqref{offcost} and \eqref{oncost}, we get that the total computational cost of the HSFEM is
\begin{equation}
	\label{hsfem}
	C_1KMn^2+C_2Fk^2n^2,
\end{equation}
where $C_1$ and $C_2$ are two generic constants. On the other hand, if the equation is solved using SC methods for $F$ times, the computational cost is
\begin{equation}
	C_1FMn^2.
	\label{oldmethod}
\end{equation}

The computational cost of SC method and the HSFEM in the multi-query setting is illustrated in Figure~\ref{fig:comparecost}. The $T$ in the vertical axis is the offline computational cost of the HSFEM, $T\approx C_1KMn^2$. The $F^*$ in the horizontal axis is the critical number of queries when the cost of SC and the HSFEM are equal, $F^*\approx C_1KMn^2/(C_1Mn^2-C_2k^2n^2)$. The slope of the blue line $k^2n^2$ is much less than that of the red line $Mn^2$ when the elliptic operator enjoys the Operator-Sparsity, which means $k$ is small.
\begin{figure}[htpb]
	\begin{center}
		\includegraphics[width=0.5\textwidth]{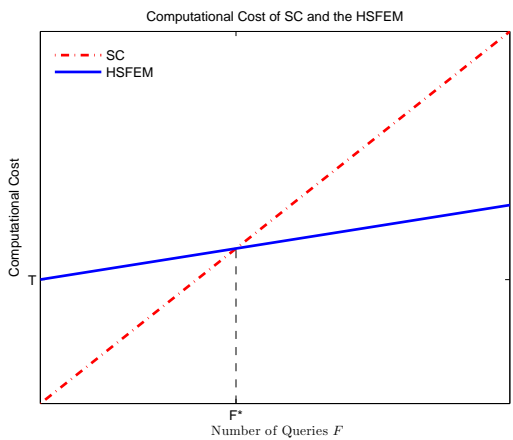}
	\end{center}
	\caption{Computational cost of SC methods and the HSFEM in the multi-query setting.}
	\label{fig:comparecost}
\end{figure}

Based on the computational cost \eqref{hsfem} and \eqref{oldmethod}, we can see that:
\begin{itemize}
	\item If the stochastic operator enjoys the Operator-Sparsity, which means $k$ is small, the online computation can be very efficient since the linear system we solve is sparse and small.
	\item Our method can achieve computational savings only if $F>F^*$. 
		
		If the equation~\eqref{targ} only needs to be solved once, then our method cannot bring in any computational savings because of the offline computation. However, if we need to solve it multiple times with different forcing functions, then our method can be very efficient.
\end{itemize}
\subsection{Methods to Reduce the Offline Computational Cost and An Online Error Estimation and Correction Procedure}
\label{correction}
The offline computational cost is expensive since it requires solving equation \eqref{targ} $K$ times with randomly chosen forcing functions using a traditional method. We discuss methods to reduce the offline computational cost and an online error estimation and correction procedure in this subsection.

The following strategies can be considered to reduce the offline computational cost:
\begin{itemize}
\item Construct the local stochastic basis on a coarser mesh.

	We can use a coarser mesh in the offline stage to reduce the $n$ in \eqref{offcost}. To be specific, let $x^i, i=1,\dots, n_c$ be a coarse mesh and $\phi^i(x), i=1,\dots, n_c$ be the corresponding coarse mesh piecewise linear basis. We use $\phi^i(x)$ to discretize the equation in the offline stage and the sampling solutions are represented by $\phi^i(x)$. We restrict them to the fine mesh nodes $x_i,i=1,\dots, n$, and then follow the implementation details in section~\ref{samplingdetail} to construct the local stochastic basis functions associated with $x_i$. Similar strategy has been employed to reduce computational cost in \cite{cheng2013data, doostan2007stochastic}.    

\item Choose a relatively large $\epsilon$ in the randomized range finding algorithm.

	In certain cases, the singular values of operator $T_i$ do not decay very fast, and a large number of local stochastic basis functions are required to obtain accuracy $\epsilon$, thus the required $K$ in the offline stage is large. To reduce the offline computational cost, we can choose a relative larger $\epsilon$, thus reduce the $K$ in the computational cost \eqref{offcost}. Note that the constructed local stochastic basis using the larger $\epsilon$ is not as accurate, and the online numerical solutions may have large error. 

	\item Choose a small $K$ in the randomized range finding algorithm.

		It will be shown in section~\ref{examples} that the solution space to \eqref{targ} has spatially heterogeneous stochastic structure, and the required number of local stochastic basis functions $k_i$ are different for different nodes $x_i$. If for certain (small) region of the domain, the required number of local stochastic basis functions $k_i$ is very large, then a large $K$ is required in the offline stage. However, for other regions of the domain, a smaller $K$ may be enough in Algorithm~\ref{localbasis}. To reduce the computational cost, we can choose a relatively small $K$. And in the Step 8-14 of Algorithm~\ref{localbasis}, if all the left singular vectors of $A\Omega$ cannot make \eqref{posteri} hold, we simply return the Algorithm with $Q=[U_1,\dots, U_K]$. 
\end{itemize}

The strategies mentioned above can reduce the offline computational cost to some degree, but the resulting local stochastic basis is not as accurate, and the corresponding online numerical solution may have large error. Here we introduce a procedure of online error estimation and correction \cite{cheng2013data} through Monte Carlo sampling. We want to emphasize that this {\em a posteriori } error estimation and correction procedure can be incorporated into the HSFEM framework to get more faithful numerical results even if the above strategies to reduce offline computational cost are not taken.

In many practical UQ problems, we care about the statistical quantity of the solution $u(x,\omega)$, which we denote by $E[g(u(x,\omega))]$, where $g$ is a functional defined on realizations of the stochastic solution. Denote $\omega^i, i=1,\dots, N_{MC}$ as $N_{MC}$ Monte Carlo sampling points, then $E[g(u(x,\omega))]$ can be approximated by
\begin{equation}
	\label{mcestimator}
	I_{N_{MC}}[g(u)]=\frac{1}{N_{MC}}\sum_{n=1}^{N_{MC}}g(u(x,\omega^n)).
\end{equation}
We denote the error of the Monte Carlo estimator \eqref{mcestimator} as
\[\epsilon_{g(u)}(N_{MC})=E[g(u)]-I_{N_{MC}}[g(u)].\]
Then $\epsilon_g(N_{MC})$ is a random variable, and according to the central limit theorem, the root-mean-square error of the Monte Carlo estimator \eqref{mcestimator} decays with $N_{MC}$ like
\begin{equation*}
	\sqrt{E[\epsilon_{g(u)}^2(N_{MC})]}= \frac{\sigma[g(u)]}{\sqrt{N_{MC}}},
\end{equation*}
where $\sigma[g(u)]$ is the standard deviation (SD) of $g(u)$. Note that the decay rate of the error $\frac{1}{\sqrt{N_{MC}}}$ is low, and this limits the application of MC methods. One way to accelerate the convergence is reducing the variance of $g(u)$. Using the HSFEM numerical solution $u_h(x,\omega)$, we can divide $E[g(u)]$ as
\begin{align}
	E[g(u(x,\omega))]&=E[g(u_h(x,\omega))]+E[g(u(x,\omega))-g(u_h(x,\omega))]\\
	&\approx E[g(u_h)]+\sum_{k=1}^{N_{MC}}\frac{1}{N_{MC}}(g(u(x,\omega^k))-g(u_h(x,\omega^k))).\label{rmc}
\end{align}
The first part in \eqref{rmc} can be efficiently computed using the HSFEM in the online stage. As to the second part, which is a Monte Carlo estimator, we have that its root-mean-square error decays as
\begin{equation*}
	\sqrt{E[\epsilon^2_{g(u)-g(u_h)}(N_{MC})]}=\frac{\sigma[g(u)-g(u_h)]}{\sqrt{N_{MC}}},
\end{equation*}
where $\sigma[g(u)-g(u_h)]$ is the SD of $g(u)-g(u_h)$.

We make a {\bf key assumption} here that the numerical solution $u_h$ using the HSFEM, even though may be not accurate, can still capture the main stochastic part of $u(x,\omega)$. Then $g(u)-g(u_h)$ has significantly smaller variance than $g(u)$, and much fewer MC samples in approximating $E[g(u)-g(u_h)]$ in \eqref{rmc} than in \eqref{mcestimator} is enough to obtain high accuracy. From this point of view, the HSFEM can be viewed as a tool for {\bf variance reduction}. We denote $\tau(\omega)=g(u)-g(u_h)$, which has relatively small variance, and set a threshold $\varepsilon$. Then the error correction procedure includes: 
\begin{enumerate}
	\item  Generate $N_{MC}$ MC samples, $\omega^k$, $k=1,\dots, N_{MC}$.
	\item  Denote $\bar \tau$, and $\tilde \tau$ as
		\begin{equation}
			\label{mcmcestimate}
			\bar\tau=\frac{1}{N_{MC}}\sum_{k=1}^{N_{MC}}\tau(\omega^k),\quad \tilde \tau=\sqrt{\frac{1}{N_{MC}(N_{MC}-1)}\sum_{k=1}^{N_{MC}}(\tau(\omega^k)-\bar \tau)^2}.
		\end{equation}
		Then an approximate $95\%$-level confidence interval of $E[\tau(\omega)]$ is given by $[\bar \tau-2\tilde \tau, \bar \tau+2\tilde \tau]$.
	\item Consider the following three cases:
		\begin{itemize}
			\item $(|\bar\tau|+2|\tilde\tau|)/|E[g(u_h)]|\leq \varepsilon$, which means the numerical result using the HSFEM is accurate. Then we use $E[g(u_h)]$ as a faithful approximation of $E[g(u)]$.
			\item If the first case does not hold, but $\tilde \tau<\varepsilon|E[g(u_h)]+\bar \tau|$, which means the error in the estimate of $E[\tau]$ is negligible. Then we use $E[g(u_h)]+\bar \tau$ as a faithful approximation of $E[g(u)]$.
			\item If neither of the above two cases holds, which means the number of MC samples $N_{MC}$ is not large enough, then we double the sampling number $N_{MC}$ and go back to step 1.
		\end{itemize}
\end{enumerate}
\subsection{Discretization in the Stochastic Direction}
\label{disOmega}
In the spatial direction, the domain $D$ is discretized using a mesh with nodes $x_i, i=1,\dots, n$, and the solution is represented using the standard piecewise linear basis, $\phi_i(x)$, $i=1,\dots, n$. While in the stochastic direction, there are basically two ways of discretization:
\begin{itemize}
	\item Represent the local stochastic basis and numerical solution using a set of orthonormal polynomials.	
		
		If in the offline stage we use PC methods to solve equation~\eqref{targ}, or if we use SC methods to solve the equation and interpolate the solution using orthonormal polynomials, then the local stochastic basis and numerical solutions will be represented in this way.

	\item Discretize $(\Omega,P)$ using sampling points $\theta^1,\theta^2,\dots, \theta^M$ with weights $w^1,w^2,\dots, w^M$.

		If in the offline stage we use SC methods or MC methods to solve equation~\eqref{targ}, then the numerical solutions and local stochastic basis functions are represented in this way. In MC methods $w^i=\frac{1}{M}$, while in SC methods $\theta^i$ and $w^i$ are determined by the underlying probability measure. 
\end{itemize}

In our numerical examples, we use the second way to discretize the problem in the stochastic direction, i.e., in the offline stage we solve the equation on sampling points $\theta^i, i=1,\dots, M$ and the local stochastic basis functions are stored as length-$M$ vectors. In computing the stiffness matrix~\eqref{smatrix} and load vector~\eqref{loadvector}, we use the same set of sampling points $\theta^i$ and weights $w^i$ for numerical integration.

The Smolyak sparse grid quadrature \cite{nobile2008sparse, smolyak1963quadrature} can alleviate the \emph{curse of dimensionality} to some degree when the dimension of the stochastic input is not very high and the solution has high regularity in the stochastic direction. In our numerical examples we use the Smolyak sparse grid to generate $\theta^i$ and $w^i$ when the dimension of the stochastic input is not very high. Otherwise we use Monte Carlo method to generate $\theta^i$ and set the weights $w^i=\frac{1}{M}$, for $i=1,\dots, M$.

Under this discretization, our final numerical solution $u_h(x,\omega)$~\eqref{finalsolu} is also defined on the sampling points $\theta^i$. If $\theta$ and $w$ are chosen to be the sparse grid collocation points, we can recover $u(x,\omega)$ for any $\omega\in\Omega$ using polynomial interpolation.\cite{nobile2008sparse} When MC sampling points are used, we can {\bf only} compute the statistical quantities of the solutions based on their values on these sampling points.

\subsection{Parallelization of the HSFEM}
The offline computation cost of the HSFEM can be expensive if a large $K$ is chosen, but the HSFEM is parallel in nature and can be easily implemented on a parallel machine. To be specific:
\begin{itemize}
	\item The offline stage involves solving equation~\eqref{targ} for a number of times using randomly generated forcing functions. These forcing functions are independent, thus this process can be parallelized.
	\item In the construction of local stochastic basis, we need to do singular value decomposition on each node point. And these decompositions are independent, thus can be parallelized.
	\item In the online stage, after solving equation~\eqref{system} and getting the coefficients $c_i^j$, we need to assemble the coupling basis functions $\phi_i(x)\xi_i^j(\omega)$ to recover the numerical solution $u_h(x,\omega)$~\eqref{finalsolu}. The assembling is independent on each node point thus can be parallelized.
\end{itemize}
\section{Numerical Results}
\label{examples}
In this section, we present numerical results to demonstrate that:
		\begin{itemize}
			\item The solution space to stochastic elliptic equation has spatially heterogeneous stochastic structure, and this can be recognized by the HSFEM in the offline stage.
			\item Several elliptic operators with high dimensional stochastic input enjoys the Operator-Sparsity. 
			\item The local stochastic basis constructed using the sampling method works well within the HSFEM framework. And the HSFEM can take advantage of the sparsity of the operator to attain significant computational savings while maintaining high accuracy in the online stage.
		\end{itemize}

To quantify the accuracy of our numerical solution, we need to compare it  with the exact solution. However in most cases it is impossible to construct the exact solution analytically, so we instead choose a suitable numerical solution as a reference. In our numerical implementation, we discretize the domain $D$ using a piecewise linear basis $\phi_i(x)$, and discretize $(\Omega, P)$ using sampling points $\theta^i$ with weights $w^i$. Then the error in our numerical solution $u_h(x,\omega)$ can be divided into two parts as
\begin{equation}
	u_h(x,\omega)-u(x,\omega)=[u_{d}(x,\omega)-u(x,\omega)]+[u_h(x,\omega)-u_{d}(x,\omega)],
	\label{decomposeerror}
\end{equation}
where $u_{d}(x,\omega)$ is the numerical solution to equation~\eqref{targ} using the SC (or MC) method. $u_d(x,\omega)$ is obtained by solving equation~\eqref{targ} on the discrete sampling points $\theta^i, i=1,\dots, M$, using the spatial basis $\phi_i(x), i=1,\dots, n$. The first part of the error in \eqref{decomposeerror} is the {\bf discretization error}, which comes from the discretization of $D$ and $\Omega$. The second part of \eqref{decomposeerror} comes from our selection of finite local stochastic basis functions from $L^2(\Omega)$. For the purpose of evaluating the performance of the HSFEM, we ignore the discretization error and only consider the second part of the error, namely, we use $u_d(x,\omega)$ as the reference solution. In the rest of this section we denote $u_{d}(x,\omega)$ as $u(x,\omega)$ for simplicity.

In each of our numerical examples in this section we compute the average number of the local stochastic basis functions constructed in the offline stage, which is
		\begin{equation*}
			k=\sum_{i=1}^n\frac{k_i}{n}.\label{ANSB}
		\end{equation*}
		If $k$ is small, and the numerical error using the HSFEM is small, then a small set of coupling basis $\{\phi_i(x)\xi_i^j(\omega)\}$ can approximate the solution space well. Thus the stochastic elliptic operator that we consider enjoys the Operator-Sparsity. Besides, the linear system we solve in the online stage is of size $n(k+1)\times n(k+1)$, so smaller $k$ also implies better efficiency of the HSFEM.

We also compute the error in Expectation (mean) and Standard Deviation (SD) of the solution, which are the two primary quantities of interest in Uncertainty Quantification:
		\begin{equation*}
			\begin{split}
			E[u]-E[u_h]&=\sum_{i=1}^Mw^i( u(x,\theta^i)-u_h(x,\theta^i) ).\\
		\sigma[u]-\sigma[u_h]&=(\sum_{i=1}^Mw^i(u(x,\theta^i)-E[u(x,\omega)])^2)^{1/2}-(\sum_{i=1}^Mw^i(u_h(x,\theta^i)-E[u_h(x,\omega)])^2)^{1/2}.
	\end{split}
	\end{equation*}

We compute the relative $L^2(D\times\Omega)$ error in the stochastic part of the solution,
		\begin{equation}
			E_{HSFEM}=\frac{ \left(\sum_{i=1}^Mw^i\|u(x,\theta^i)-u_h(x,\theta^i)\|_{L^2(D)}^2\right)^{\frac{1}{2}}}
			{\left(\sum_{i=1}^Mw^i\|u(x,\theta^i)-\bar{u}(x)\|_{L^2(D)}^2\right)^{\frac{1}{2}}
			}.
			\label{rl2}
		\end{equation}
		Note that in the denominator is the $L^2(D\times\Omega)$ norm of $u(x,\omega)-\bar u(x)$, not $u(x,\omega)$, so $E_{HSFEM}$ measures the capacity of the HSFEM in capturing the stochastic part of the solution.
\subsection{Comparison with the KL Expansion}
\label{subtlecompare}
The key difference of the HSFEM framework from traditional methods is that different stochastic basis functions are used in different regions of the domain to approximate the stochastic part of the solution. By doing so we allow for spatially heterogeneous stochastic structure of the solution space and expect to reduce the total degrees of freedom and computational cost in the online stage. 

To demonstrate the advantage of approximating the stochastic behavior of the solution locally, we compare the accuracy of the HSFEM numerical solution with the truncated KL expansion of the exact solution, which, as we introduced in section~\ref{klsection}, is the {\bf optimal approximation} to the stochastic part of the solution in $L^2(D\times\Omega)$ when global stochastic basis functions are used.

Recall that the numerical solution using the HSFEM $u_h(x,\omega)$ is represented as
\begin{equation*}
	\label{hsfemsolu}
	u_h(x,\omega)=\sum_{i=1}^n\sum_{j=0}^{k_i}c_i^j\phi_i(x)\xi_i^j(\omega),
\end{equation*}
in which a total number of
\begin{equation*}
	S=\sum_{i=1}^n (1+k_i)
\end{equation*}
basis functions are used. On the other hand, consider the $\tilde k$-term KL expansion of $u(x,\omega)$,
\begin{equation}
	\label{klapp}
	u_{\tilde k}(x,\omega)=\bar u(x)+\sum_{j=1}^{\tilde k}\sqrt{\lambda_j}\psi_j(x)\xi_j(\omega)=\sum_i^n\bar u(x_i)\phi_i(x)+\sum_{j=1}^{\tilde k}\sum_{i=1}^n\sqrt{\lambda_j}\psi_j(x_i)\phi_i(x)\xi_j(\omega),
\end{equation}
in which a total number of 
\begin{equation*}
	n+\tilde kn
\end{equation*}
basis functions are used. To make a fair comparison between the HSFEM numerical solution and the truncated KL expansion, we should keep the degrees of freedom the same, namely, 
\begin{equation*}
	S=n+\tilde kn,
\end{equation*}
which means we choose $\tilde k=k=\frac{\sum_i^nk_i}{n}$ in the truncated KL expansion \eqref{klapp} to make the comparison.

Note that in both $u_h(x,\omega)$ and $u_k(x,\omega)$, $k$ (on average) stochastic basis functions are used to approximate the stochastic part of the solution on each node $x_i$. The difference is that the local stochastic basis functions $u_h(x,\omega)$ uses are different on each node point, while the stochastic basis functions in $u_k(x,\omega)$ are the same on the whole domain. We define the relative KL truncation error as
\begin{equation}
	\label{rte}
	E_{KL}=\frac{\|u(x,\omega)-u_k(x,\omega)\|_{L^2(D\times\Omega)}}{\|u(x,\omega)-\bar u(x,\omega)\|_{L^2(D\times\Omega)}}.
\end{equation}
We compare $E_{KL}$ with the numerical error $E_{HSFEM}$ \eqref{rl2} for each numerical example in this section. We emphasize that this is an unfair comparison since the stochastic basis in $u_k(x,\omega)$ is adapted to the solution with a specific forcing and does not necessarily approximate another solution well, while the stochastic basis in $u_h(x,\omega)$ is constructed to approximate the whole solution space for all $f(x)\in L^2(D)$. 

So if for some forcing function $f(x)$, $E_{HSFEM}\leq E_{KL}$ (or they are of the same order considering the unfairness of this comparison), then it reflects the heterogeneous stochastic structure of the solution, and the advantage of approximating the stochastic behavior of the solution locally.
\subsection{A Thoroughly Studied 1D Model Problem}
In this subsection we consider the following 1D elliptic SPDE:
\begin{equation} 
	\begin{cases}
		-\frac{\partial}{\partial x}(a(x,\omega)\frac{\partial}{\partial x} u(x,\omega))=f(x),\quad x\in D=(0,1), \quad \omega \in \Omega,\\
		u(0,\omega)=0,\quad u(1,\omega)=0.
	\end{cases}
	\label{onedequation}
\end{equation} 

We assume the KL expansion of $\log a(x,\omega)$ is given by
\begin{equation}
	\label{1dcoef}
	\log a(x,\omega)=\sum_{i=1}^{m}\cos(2\pi i x)\omega_i,
\end{equation}
where $\omega_i$, $i=1,\dots, m$, are independent random variables, 
\begin{equation*}
	\omega_i\sim \mathcal{U}(-1/2,1/2).
\end{equation*}

Note that in the KL expansion of $\log a(x,\omega)$, the first $m$ singular values are equal, which means the $m$ random variables $\omega_i$ contribute equally to $\log a(x,\omega)$ in the $L^2(D\times \Omega)$ sense and none of them is negligible. So this problem has genuine high stochastic dimension for large $m$.
\subsubsection{The Linear Compact Operator $T_i$}
\label{compactsection}
Recall that $T_i$ \eqref{Ti} is a compact linear operator mapping from the forcing $f(x)\in L^2(D)$ to the stochastic part of the solution at $x_i$, $u(x_i,\omega)-\bar u(x_i)\in L^2(\Omega)$. $T_i$ can be discretized to a matrix $\hat T_i$ of size $M\times N$ through \eqref{hatti}. Since $T_i$ is defined as a solution operator, we do not have direct access to each entry of $\hat T_i$, but we have access to matrix-vector multiplication of $\hat T_i$ based on \eqref{mvmultiply}. In this subsection, we use $\hat T_i$ to act on the standard basis $e_k, k=1,\dots, N$, of $R^N$, by doing which we explicitly get all the columns of $\hat T_i$. We use this explicit $T_i$ to investigate the 1D stochastic elliptic operator.

In the spatial direction we discretize the domain $[0,1]$ using uniform mesh of size $h=1/256$. In the stochastic direction, we discretize $(\Omega, P)$ using $(\theta^i, w^i)$, which are the 4th order Smolyak sparse grid collocation points. The orthonormal basis functions to discretize $L^2(D)$~\eqref{basis} are chosen to be
\begin{equation}
	\{1,\dots, 2\sin(2\pi lx), 2\cos(2\pi lx)\}
	\label{discretel2}
\end{equation}
with $l=128$, which contains all the Fourier modes that can be resolved by the given mesh.

We choose $m=20$ in \eqref{1dcoef}, and explicitly compute the $T_i$ with $x_i=1/2$. The decay of the singular values of $T_i$, $\sigma_i^k$, is plotted in Figure ~\ref{fig:Ti_decay}. From this figure, we can see that even though the stochastic input $\omega$ has high dimension, $\sigma_i^k$ still decays exponentially fast. The fast-decay of the singular values of $T_i$ reveals the Operator-Sparsity of this 1D stochastic operator. It also justifies the sampling method to construct the local stochastic basis, since the performance of the randomized range finding algorithm depends on the decay rate of the singular values of the operator.\cite{halko2011finding}
\begin{figure}[htpb]
	\begin{center}
		\includegraphics[width=0.5\textwidth]{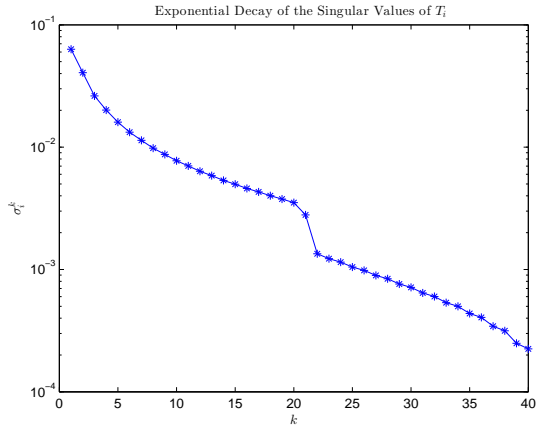}
	\end{center}
	\caption{Exponential decay of the singular values of $T_i$.}
	\label{fig:Ti_decay}
\end{figure}

The number of required terms to make the truncation error of $T_i$ less than $\epsilon$ is 
\[k_i=\inf\{k: \sigma_i^k\leq \epsilon\}.\]
We choose $m$ ranging from $10$ to $30$ in \eqref{1dcoef}, and for each $m$, we compute the number of required terms $k_i$ (with $x_i=1/2$) to make the truncation error of $T_i$ less than $\epsilon=2\times 10^{-3}$. The dependence of $k_i$ on $m$ is plotted in Figure~\ref{fig:k_growth}. From this figure we can see that $k_i$ seems to grow unboundedly (about linearly) with the stochastic dimension. This does not contradict our analysis in section~\ref{interpretation}, since as $m$ increases, the ellipticity of the stochastic operator deteriorates, i.e., $\lambda_{\min}$ decreases and $\lambda_{\max}$ increases. To confirm the analysis in section~\ref{interpretation}, we consider a normalized model with the coefficient $\tilde a(x,\omega)$ given by
\begin{equation*}
\log \tilde a(x,\omega)=\sum_{i=1}^m\frac{20}{m}\cos(2\pi ix)\omega_i,
\end{equation*}
where $\omega_i, i=1,\dots,m$ are independent and $\omega_i\sim\mathcal{U}(-1/2,1/2)$. We can see that for all $m\geq 1$,
\begin{equation*}
	\tilde a(x,\omega)\in [e^{-10}, e^{10}].
\end{equation*}

For this normalized model, we use the same discretization as the unnormalized model and compute the number of required terms $k_i$ to make the truncation error of $T_i$ less than $\epsilon=2\times 10^{-3}$. The dependence of $k_i$ on $m$ is plotted in Figure~\ref{fig:k_no_growth}. From this figure, we can see that $k_i$ seems to remain bounded as the stochastic dimension $m$ increases, which agrees our analysis in section~\ref{interpretation}. 
\begin{figure}[htpb]
	\centering
	\begin{subfigure}{.5\textwidth}
		\centering
		\includegraphics[width=\textwidth]{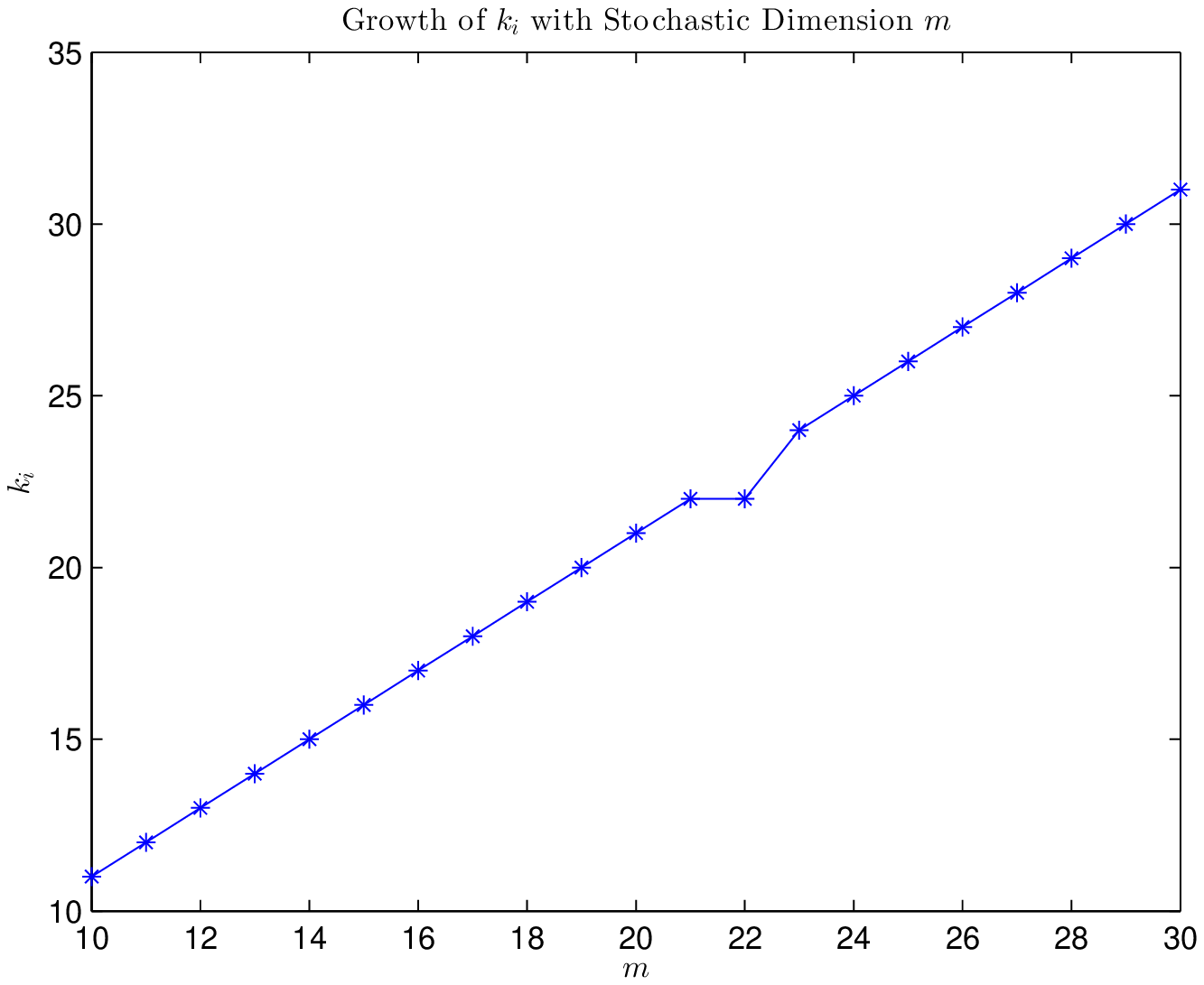}
		\caption{Unnormalized model.}
		\label{fig:k_growth}
	\end{subfigure}%
	\begin{subfigure}{.5\textwidth}
		\centering
		\includegraphics[width=\textwidth]{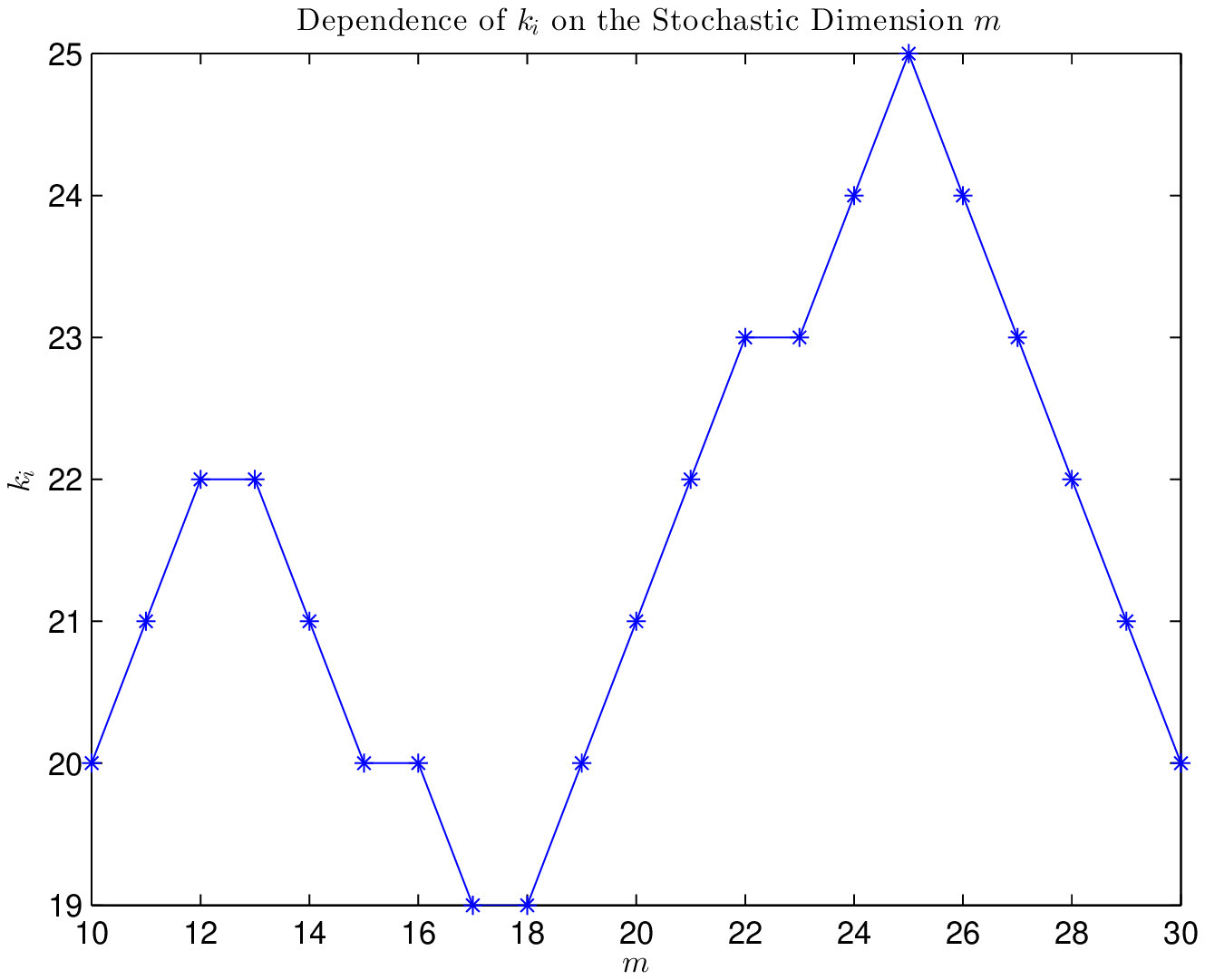}
		\caption{Normalized model.}
		\label{fig:k_no_growth}
	\end{subfigure}
	\caption{Dependence of $k_i$ on $m$.}
\end{figure}
\subsubsection{Performance of the Heterogeneous Stochastic Finite Element Method}
\label{performance}
In this section we apply the HSFEM to solve equation~\eqref{onedequation} with $m=20$ in \eqref{1dcoef}. We use the same discretization in the spatial and stochastic directions as the previous subsection. In the offline stage we discretize $L^2(D)$ as in~\eqref{discretel2} and the $\epsilon$, $r$ and $K$ in Algorithm~\ref{localbasis} are chosen to be
\[ \epsilon= 10\sqrt{2/\pi}\times10^{-3},\quad K=50,\quad r=5.\]

The average number of local stochastic basis functions constructed in the offline stage is
\begin{equation*}
	k=\frac{1}{n}\sum_{i=1}^nk_i\approx 26.2.
\end{equation*}
For this $m=20$ dimensional problem, $k=26.2$ is quite small compared with the {\em curse of dimensionality} suffered from by the PC and SC methods. The distribution of $k_i$ is plotted in Figure~\ref{fig:nlbs}.
\begin{figure}[htpb]
	\begin{center}
		\includegraphics[width=0.8\textwidth]{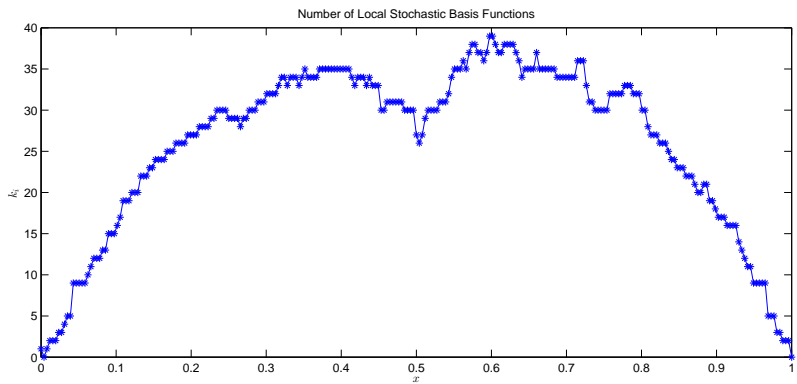}
	\end{center}
	\caption{Distribution of local stochastic basis constructed offline.}
	\label{fig:nlbs}
\end{figure}

From this figure we can see that more local stochastic basis functions are constructed in the interior of the domain than near the boundary. This is because we have chosen deterministic (actually homogeneous) boundary condition in equation~\eqref{onedequation}, and the solution has less randomness near the boundary.

In the online stage we solve equation \eqref{onedequation} with forcing function given by
\begin{equation} 
	f(x)=1-x+x^2-x^3.
	\label{force11}
\end{equation}

The expectation and standard deviation of the solution, and the numerical errors in these two quantities are plotted in Figure~\ref{fig:onedrf}. We can see that our method attains high accuracy in both mean and SD of the solution, which reflects that the local stochastic basis constructed using the sampling method in section~\ref{samplingsection} works well within the HSFEM framework.
\begin{figure}[htpb]
	\begin{center}
		\includegraphics[width=\textwidth]{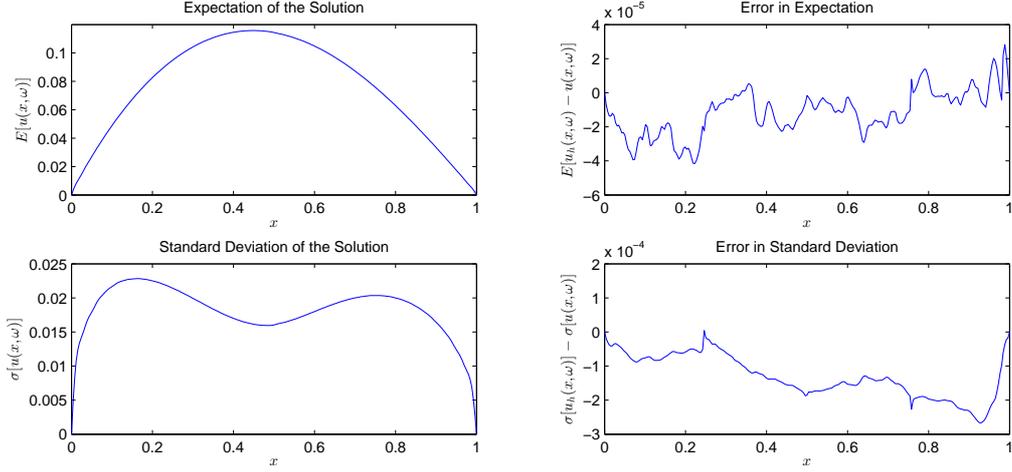}
	\end{center}
	\caption{Numerical error in Expectation and Standard Deviation of the solution. }
	\label{fig:onedrf}
\end{figure}

The relative $L^2(D\times\Omega)$ error $E_{HSFEM}$ ~\eqref{rl2} and relative KL truncation error $E_{KL}$ \eqref{rte} are listed in Table~\ref{tab:onedsf}. $E_{HSFEM}=1.83\times 10^{-2}$ is small, which means the HSFEM attains high accuracy in capturing the stochastic part of the solution. For this problem, the KL truncation error $E_{KL}$ is of the same order as $E_{HSFEM}$. As we have argued in section~\ref{subtlecompare}, this reflects the spatially heterogeneous stochastic structure of the solution and the advantage of approximating the stochastic part of the solution locally. 

\begin{table}
	\centering
	\begin{tabular}{|c|c|c|c|}
	\hline
	$k$ & Relative $L^2(D\times \Omega)$ Error $E_{HSFEM}$ & Relative KL truncation Error $E_{KL}$\\
	\hline
	$26.2$ & $1.83\times 10^{-2}$ &  $1.81\times 10^{-2}$\\
	\hline
	\end{tabular}
	\caption{Relative $L^2(D\times\Omega)$ error $E_{HSFEM}$ and relative KL truncation error $E_{KL}$.}
	\label{tab:onedsf}
\end{table}
\subsubsection{Convergence rate of the HSFEM }
The parameter $\epsilon$ in Algorithm~\ref{localbasis} affects the number of the returned local stochastic basis functions and the accuracy of the online numerical solution. To study how does $\epsilon$ affect the numerical error in the online stage, we consider solving the same equation as the previous subsection using different $\epsilon$ in the offline stage. Since the HSFEM is randomized in nature, the $k$ and $L^2(D\times\Omega)$ Error are both random. To reduce the fluctuation, for each $\epsilon$, we construct the local stochastic basis and solve the equation for $10$ times. The averaged $k$ and $L^2(D\times\Omega)$ Error of the $10$ times for different $\epsilon$ are plotted in Figure~\ref{fig:e_ek}.

\begin{figure}[htpb]
	\centering
		\begin{subfigure}{0.5\textwidth}
			\centering
			\includegraphics[width=\textwidth]{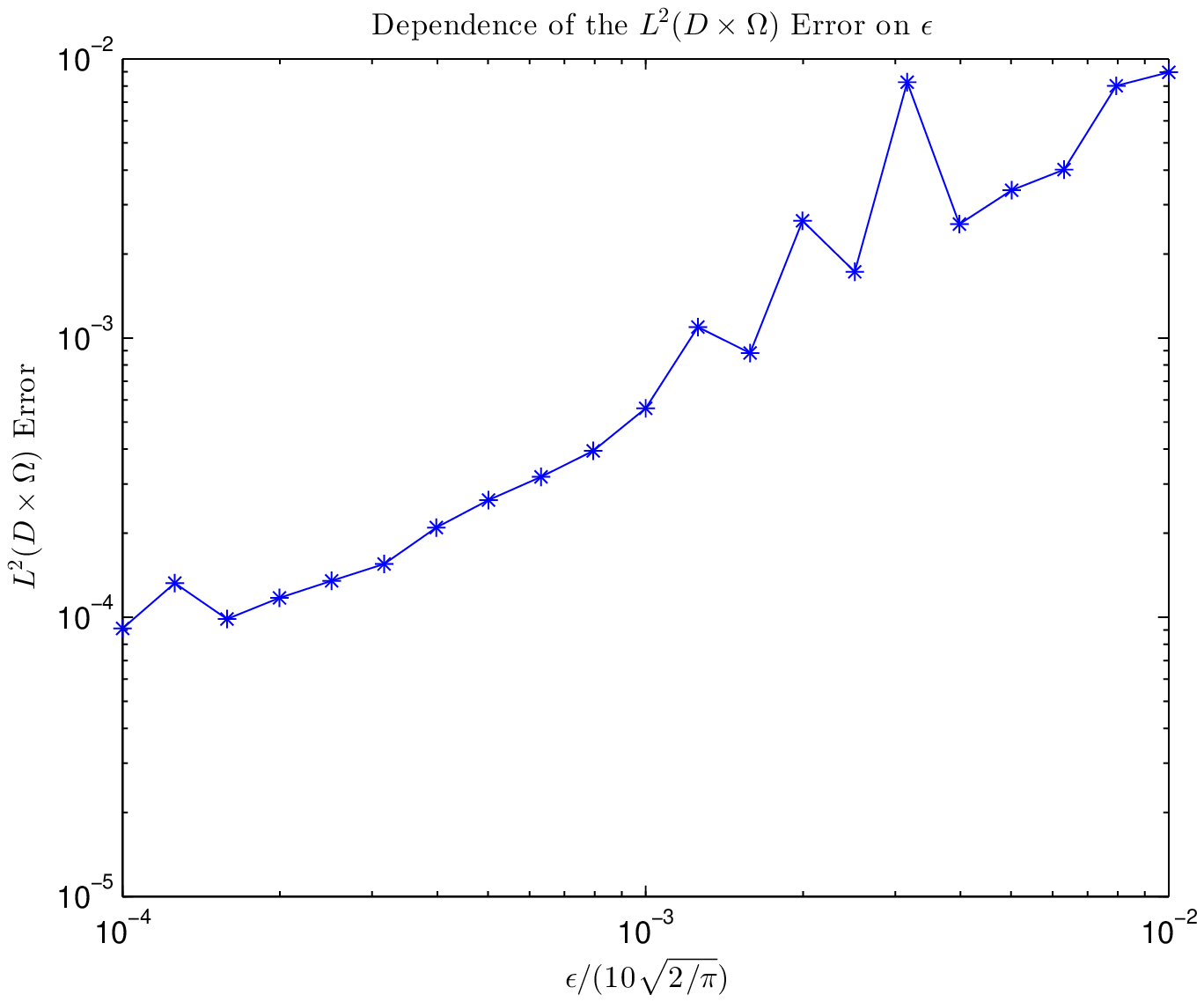}
			\caption{$L^2(D\times\Omega)$ Error with $\epsilon$.}
			\label{eestudy}
		\end{subfigure}%
		\begin{subfigure}{.5\textwidth}
			\centering
			\includegraphics[width=\textwidth]{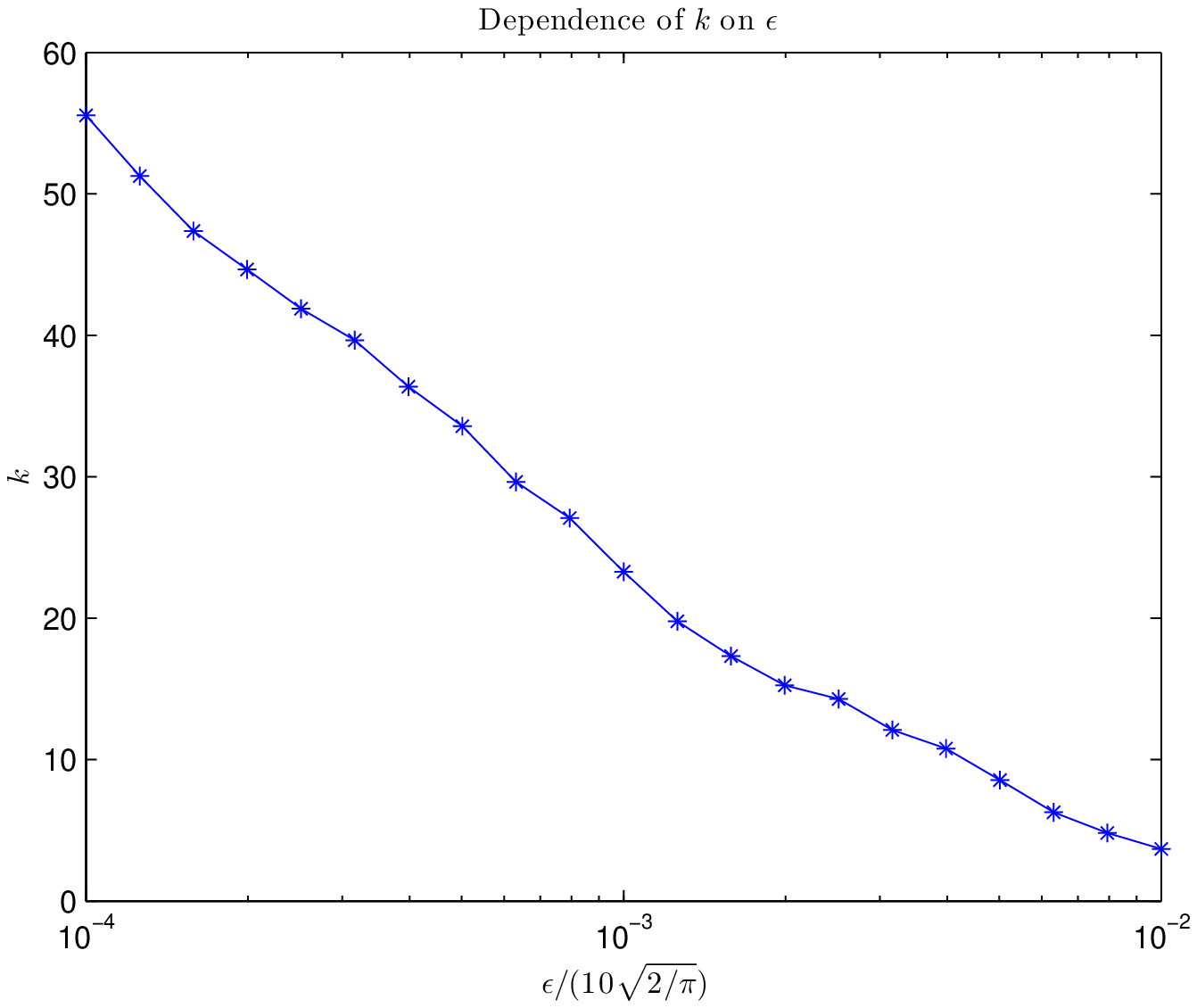}
			\caption{$k$ with $\epsilon$.}
			\label{ekstudy}
		\end{subfigure}%
		\caption{Dependence of $L^2(D\times\Omega)$ Error and $k$ on the parameter $\epsilon$.}
	\label{fig:e_ek}
\end{figure}

From Figure~\ref{eestudy}, we can see that the $L^2(D\times\Omega)$ error, which we denote by $E$, grows about linearly with $\epsilon$. By doing linear regression $\log E=\log C+\alpha\log \epsilon$, we obtain that
\begin{equation*}
	E\approx C\times \epsilon^{1.08},\quad C\approx 6.8\times 10^{-2}.
\end{equation*}
The linear growth of the $L^2(D\times\Omega)$ error with $\epsilon$ agrees with the approximation property \eqref{trialspaceproperty}. 

From Figure~\ref{ekstudy}, we can see that the returned $k$ grows about linearly with $-\log\epsilon$ as \eqref{mildgrowth}, which is much milder than~\eqref{worst} because the singular values of $T_i$ actually decay exponentially fast.

\subsubsection{The Online Error Estimation and correction Procedure}
\label{correctionsection}
In this subsection, we illustrate the implementation of the strategies to reduce the offline computational cost and the online error estimation and correction procedure introduced in section~\ref{correction}. We consider the stochastic elliptic operator given by \eqref{1dcoef} with $m=30$. For this problem with very high dimensional stochastic input, a very large number of collation points are required to obtain accuracy even if the Smolyak sparse grid quadratures are employed. So we instead use Monte Carlo samplings to discretize the probability space $(\Omega,P)$. We generate $M=4\times 10^4$ samples of $\omega$ according to its distribution, and store them as $\theta^1,\dots, \theta^M$. We set the weights $w^i=\frac{1}{M}$, then $(\Omega, P)$ is discretized to $(\theta^i,w^i)$. With this discretization in the stochastic direction, the local stochastic basis functions are stored as length-$M$ vectors, and the online numerical solutions are also defined on these sampling points. We can compute the statistical quantities of the numerical solutions based on their values on the sampling points and the weights $w^i$, but cannot recover the whole solution using polynomial interpolation as the previous cases where the stochastic collocation points are used to discretize $(\Omega,P)$. 

In the offline stage, we discretize the domain $D=[0,1]$ using a uniform coarse mesh of size $h=1/128$ to construct the local stochastic basis. In Algorithm~\ref{localbasis}, we choose
\[\epsilon/10\sqrt{2/\pi}=3\times10^{-3},\quad K=35,\quad r=5.\]
We first solve equation~\eqref{1dcoef} with randomly chosen forcing functions using the coarse mesh spatial basis, and then restrict the sampling solutions to the nodes of a uniform fine mesh of size $h=1/256$ to construct the local stochastic basis using Algorithm~\ref{localbasis}. 

The average number of local stochastic basis functions constructed in the offline stage is 
\[k=\frac{\sum_{i=1}^nk_i}{n}=24.8.\]
This $k$ is small, which means in the online stage the linear system that we need to solve is small. The distribution of $k_i$ is plotted in Figure~\ref{fig:fail}. We can see that for about half of the node points, we have $k_i=K$. This is because in Step 7-14 of Algorithm~\ref{localbasis}, all the left singular vectors of $A\Omega$ cannot make the condition in Step 10 hold. And to control the offline computation cost, we did not choose a larger $K$ as in Step 15-18, but simply return the algorithm with $Q=[U_1,\dots, U_K]$.
\begin{figure}[htpb]
	\begin{center}
		\includegraphics[width=0.8\textwidth]{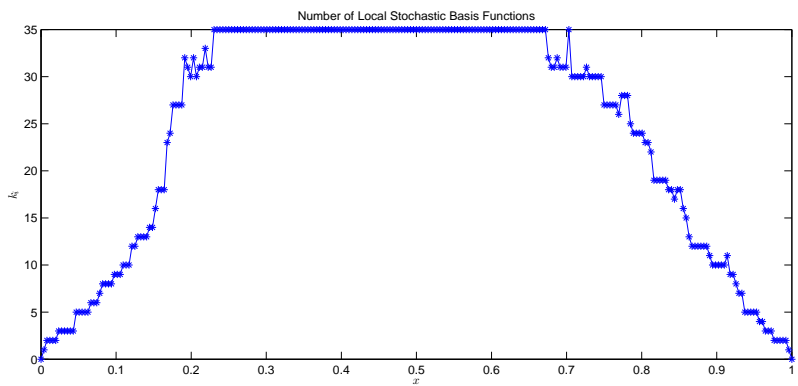}
	\end{center}
	\caption{Distribution of local stochastic basis.}
	\label{fig:fail}
\end{figure}

The mean and SD of the solution and the numerical errors in these two quantities are plotted in Figure~\ref{fig:1dmc}. We can see that there are relatively larger errors in these two quantities, and this is because the strategies introduced in section~\ref{correction} are taken to reduce the offline computational cost and the constructed local stochastic basis is not as accurate. For this case, the online error estimation and correction procedure is needed to obtain faithful numerical results.
\begin{figure}[htpb]
	\begin{center}
		\includegraphics[width=0.8\textwidth]{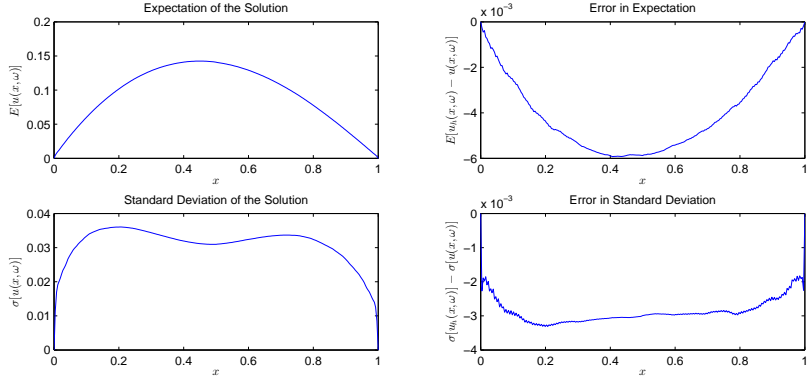}
	\end{center}
	\caption{Numerical error in Expectation and Standard Deviation of the solution.}
	\label{fig:1dmc}
\end{figure}

To illustrate the online error estimation and correction procedure, we consider the second order moment of the solution at $x=1/2$, i.e., we consider 
\[g(u)=u(1/2,\omega)^2.\]
Then based on the online numerical solution using the HSFEM, we have
\begin{equation}
	E[g(u_h)]=1.899\times 10^{-2}.
	\label{eng}
\end{equation}
And for the exact solution $u(x,\omega)$, which is the MC solution using the sampling points $\theta^i$ since we do not consider discretization error here, we have
\begin{equation}
	\label{eeg}
	E[g(u)]=2.078\times 10^{-2}.
\end{equation}

We can see that the error in $E[g(u_h)]$ is non-negligible. In practice, we will not compute the exact solution $u(x,\omega)$ because of the expensive computational cost, but instead apply the online error correction procedure given in section~\ref{correction} to correct $E[g(u_h)]$ and get a better approximation of $E[g(u)]$.

Since the HSFEM online numerical solution $u_h(x,\omega)$ is only defined on the discrete sampling points $\theta^i$, we cannot apply MC method to the original equation~\eqref{1dcoef} to correct error as in \eqref{rmc}. Instead, we apply the error estimation and correction procedure to the discretized problem with $\Omega$ replaced by $\{\theta^1, \dots, \theta^M\}$ and $P$ replaced by $w^1, \dots, w^M$. We choose $N_{MC}=100$ in \eqref{rmc}, and generate $N_{MC}$ independent random integers $i$ uniformly distributed from $1$ to $M$ (because $w^i$ are all equal). We denote them as $i_{n_{mc}}, n_{mc}=1,\dots, N_{MC}$ and then solve equation \eqref{1dcoef} on these sampling point $\theta^{i_{n_{mc}}}$. The average and variance of $\tau(\omega)$ \eqref{mcmcestimate} are approximately
\begin{equation*}
	E[g(u_h)]+\bar \tau=2.062\times 10^{-2},\quad \tilde \tau=1.025\times 10^{-4}.
\end{equation*}
Thus we get an approximate $95\%$ level confidence interval for $E[g(u)]$,
\begin{equation}
	\label{mcmc}
	[2.042\times 10^{-2}, 2.082\times 10^{-2}].
\end{equation}
If we instead apply MC methods directly to the discretized equation \eqref{1dcoef} using the same $N_{MC}=100$ in \eqref{mcestimator} , we get 
\begin{equation*}
	E[g(u)]\approx 2.033\times 10^{-2},\quad \sigma[g(u)]=9.210\times 10^{-4}.
\end{equation*}
and an approximate $95\%$ level confidence interval for $E[g(u)]$,
\begin{equation}
	\label{dmc}
	[1.805\times 10^{-2}, 2.261\times 10^{-2}].
\end{equation}
We can see that the approximation interval \eqref{mcmc} is much more accurate than \eqref{dmc}, and this is because the HSFEM solution $u_h(x,\omega)$ can capture the main stochastic part of the solution, and $g(u)-g(u_h)$ has much smaller variance than $g(u)$. Using the discretized exact solution, we can compute that
\[\sigma[g(u)]=1.2361\times 10^{-2},\quad \sigma[g(u)-g(u_h)]=2.3131\times 10^{-3}.\]

This example demonstrates the effectiveness of the HSFEM as a tool for variance reduction. We emphasize that, in practice, this online error estimation and correction procedure can be used to get more faithful numerical results even if the strategies to reduce offline cost are not taken. 
\subsection{A 2D Example With Gaussian Random Variables}
In this section, we consider the following 2D SPDE
\begin{equation} 
	\begin{cases}
	-{\rm div}(a(x,y,\omega)\nabla u(x,y,\omega))=f(x,y),\quad (x,y)\in D=(0,1)^2, \quad \omega \in \Omega,\\
	u(x,\omega)|_{\partial D}=0.
	\end{cases}
	\label{2dequation}
\end{equation} 

The coefficient $a(x,\omega)$ is given by 
\begin{equation*}
	\log( a(x,y,\omega))=1+\frac{1}{4}\sum_{k=1}^{12}\omega_k(\sin(k\pi x)+\cos((13-k)\pi y)),
	\label{2dcoef}
\end{equation*}
where $\omega_i, i=1,\dots, 12$ are independent Gaussian random variables, $\omega_i\sim\mathcal{N}(0,1)$. For this coefficient $a(x,\omega)$, the uniform ellipticity condition \eqref{ellipticity} is actually violated. But in our implementation, we discretize $\Omega$ using (finite) sampling points, and the discretized problem is still elliptic and well-posed.

None of the $12$ random variables is negligible in the coefficient thus this stochastic operator has genuine high stochastic dimension. In the spatial direction, the domain $D$ is discretized using a standard right triangular mesh of size $h=1/64$, resulting in $2\times 64^2$ elements. In the stochastic direction, we discretize the problem using the 4th order Smolyak sparse grid collocation points, getting a total of $M=3361$ sampling points $\theta^i$. In the offline stage we choose $l=32$ in \eqref{discretizeL2D}, then $\hat D$ contains all the Fourier modes that can resolved by the given mesh. In Algorithm~\ref{localbasis}, we choose  
\[ K=50,\quad \epsilon/(10\sqrt{2/\pi})=3\times 10^{-4}, \quad r=5.\] 
The average number of local stochastic basis functions constructed in the offline stage is
\begin{equation*}
	k=\frac{1}{n}\sum_{i=1}^nk_i\approx16.5,
\end{equation*}
which is small given that the stochastic input has high dimension $m=12$. The maximum number of local stochastic basis functions, $\max_i k_i=41$, is significantly larger than $k$. This reveals that the solution space has significantly richer stochastic structure in some regions of the domain than others, and this heterogeneous stochastic structure of the solution space is recognized by the HSFEM.

The forcing function we choose in the online stage is 
\begin{equation*}
	f(x,y)=1+x-2y.
\end{equation*}

The expectation and standard deviation of the solution, and the numerical errors in these two quantities are plotted in Figure~\ref{fig:2dexpsforceresult}. From this figure we can see that the HSFEM attains high accuracy in both expectation and standard deviation of the solution.
\begin{figure}[htpb]
	\begin{center}
		\includegraphics[width=0.8\textwidth]{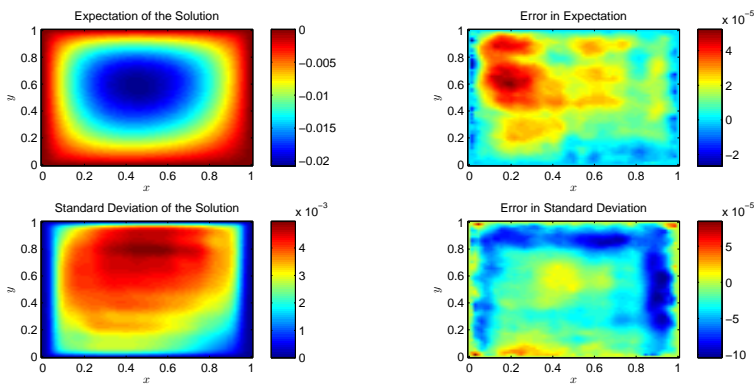}
	\end{center}
	\caption{Error in Expectation and Standard Deviation of the solution. }
	\label{fig:2dexpsforceresult}
\end{figure}
The relative $L^2(D\times\Omega)$ error in the stochastic part of the solution $E_{HSFEM}$, and relative KL truncation error $E_{KL}$ are listed in Table~\ref{tab:2dexpsf}. $E_{HSFEM}=4.5\times 10^{-2}$ is small, which means our method attains good accuracy in capturing the stochastic part of the solution, $u(x,\omega)-\bar u(x)$. For this problem, $E_{HSFEM}$ is significantly less than $E_{KL}$, and as we have argued, this reveals the heterogeneous stochastic structure of the solution space and the advantage of approximating the stochastic behavior of the solution using local stochastic basis.

\begin{table}
	\centering
	\begin{tabular}{|c|c|c|c|}
	\hline
	$k$ & $E_{HSFEM}$ & $E_{KL}$\\
	\hline
	$16.5$ & $4.46\times 10^{-2}$ &  $9.60\times 10^{-2}$\\
	\hline
	\end{tabular}
	\caption{The relative $L^2(D\times\Omega)$ error and the relative KL truncation error $E_{KL}$. }
	\label{tab:2dexpsf}
\end{table}

\subsection{A 2D Example with Discontinuous Coefficients}
In this section, we consider solving equation~\eqref{2dequation} with $a(x,y,\omega)$ discontinuous. We divide $D=[0,1]\times[0,1]$ to 4 regions, which are
\[D_1=[0,1/2]\times[0,1/2],\quad D_2=[1/2,1]\times[0,1/2],\quad D_3=[0,1/2]\times[1/2,1],\quad D_4=[1/2,1]\times[1/2,1].\]
And the coefficient $a(x,y,\omega)$ is given by
\begin{equation}
	\label{jumpcoef}
	\log(a(x,y,\omega))=
	\begin{cases}
		\sum_{k=1}^3\omega_k(\sin(2k\pi x)+\cos(2(4-k)\pi y)), \quad & (x,y)\in D_1,\\
		1+\sum_{k=4}^6\omega_k(\sin(2(k-3)\pi x)+\cos(2(7-k)\pi y)),\quad & (x,y)\in D_2,\\
		2+\sum_{k=7}^9\omega_k(\sin(2(k-6)\pi x)+\cos(2(10-k)\pi y)),\quad & (x,y)\in D_3,\\
		3+\sum_{k=10}^{12}\omega_k(\sin(2(k-9)\pi x)+\cos(2(13-k)\pi y)),\quad & (x,y)\in D_4,
	\end{cases}
\end{equation}
where $\omega_i, i=1,\dots, 12$ are independent standard Gaussian random variables, $\omega_i\sim\mathcal{N}(0,1)$.

None of the random variables $\omega_i$ in \eqref{jumpcoef} is negligible, so this problem has real high stochastic dimension. The discretization of the problem in the spatial and stochastic directions are the same as the previous example. And in the randomized range finding Algorithm~\ref{localbasis} we choose
\[K=50, \quad \epsilon/(10\sqrt{2/\pi})=10^{-4},\quad r=5.\]

The average number of local stochastic basis functions constructed in the offline stage is
\begin{equation*}
	k=\frac{1}{n}\sum_{i=1}^nk_i\approx 12.3,
\end{equation*}
which can be considered as small given that the stochastic input has high dimension $m=12$.

The maximum number of local stochastic basis functions $\max_i k_i=40$ is significantly larger than $k$, which reflects the heterogeneous stochastic structure of the solution space. The distribution of $k_i$ is plotted in Figure~\ref{fig:2dlb}. We can see that more local stochastic basis functions are put in $D_1$, which according to \eqref{jumpcoef}, is the region where the random part of the coefficient $a(x,\omega)$ has stronger effect.  

\begin{figure}[htpb]
	\begin{center}
		\includegraphics[width=0.5\textwidth]{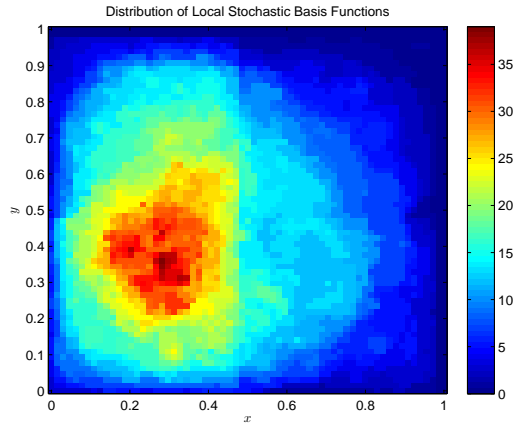}
	\end{center}
	\caption{Distribution of local stochastic basis for the example with discontinuous coefficient.}
	\label{fig:2dlb}
\end{figure}

The forcing function we choose in the online stage is $f(x,y)=1+x-2y$. The expectation and standard deviation of the solution, and the numerical errors in these two quantities are plotted in Figure~\ref{fig:disreult}. 
\begin{figure}[htpb]
	\begin{center}
		\includegraphics[width=0.8\textwidth]{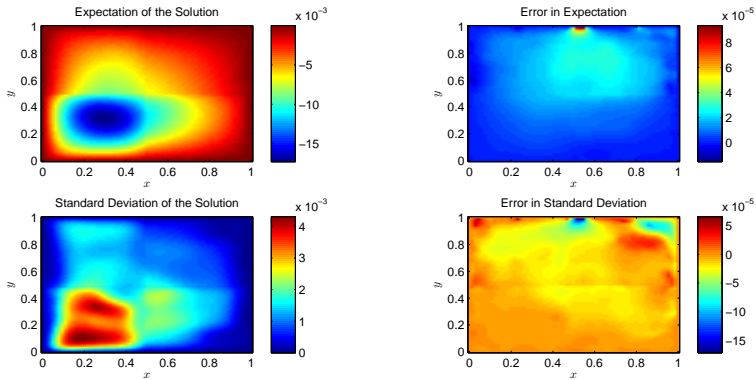}
	\end{center}
	\caption{Error in Expectation and Standard Deviation of the solution.} 
	\label{fig:disreult}
\end{figure}
The relative error in stochastic part of the solution $E_{HSFEM}$, and relative KL truncation error $E_{KL}$ are listed in Table~\ref{tab:2djump}. We can see that our method attains good accuracy for this problem with discontinuous coefficients. Again, we find that $E_{HSFEM}$ is significantly smaller than $E_{KL}$. 
\begin{table}
	\centering
	\begin{tabular}{|c|c|c|c|}
	\hline
	$k$ & $E_{HSFEM}$ & $E_{KL}$\\
	\hline
	$15.2$ & $3.3\times 10^{-2}$ &  $1.5\times 10^{-1}$\\
	\hline
	\end{tabular}
	\caption{The relative $L^2(D\times\Omega)$ error $E_{HSFEM}$ and the relative KL truncation error $E_{KL}$.}
	\label{tab:2djump}
\end{table}
\subsection{A 2D Problem With Very High Dimensional Stochastic Input}
\label{study}
In this subsection we study a 2D problem with very high dimensional Gaussian input. We denote the following set of orthonormal $L^2([0,1]^2)$ functions
\[\{2\sin(2\pi x),\dots, 2\sin(2k\pi x),\dots, 2\sin(12\pi x)\}\otimes\{2\cos(2\pi y),\dots, 2\cos(2k\pi y),\dots, 2\cos(12\pi y)\},\]
as $\{\Psi_1(x,y),\Psi_2(x,y)\dots, \Psi_{36}(x,y)\}$, and consider operator $-{\rm div}\left(a(x,y,\omega)\nabla(\cdot)\right)$ with $a(x,y,\omega)$ given by 
\[\log (a(x,y,\omega))=\frac{1}{2}\sum_{i=1}^{36}\omega_i\Psi_i(x,y),\]
where $\omega_i, i=1,\dots, 36$ are independent standard Gaussian random variables, $\omega_i\sim\mathcal{N}(0,1)$. This operator has genuine high dimensional stochastic input, and $\omega_i, i=1,\dots, 36$, contribute equally to $\log(a_k(x,y,\omega))$ in $L^2(D\times\Omega)$. We discretize the domain using a standard right triangular mesh of size $h=\frac{1}{32}$, and $D$ is divided into $2\times 32^2$ elements. We choose $l=16$ in the discretization of $L^2(D)$~\eqref{discretizeL2D}, then $\hat D$ contains all the Fourier modes that can be resolved by this mesh. In the stochastic direction, since the dimension of $\omega$ is $36$, which is very high, a very large number of collocation points are required even if we use the Smolyak sparse grid quadrature. So we instead discretize $\Omega$ using $M=10^4$ Monte Carlo sampling points and set the weights $w^i=1/M$. In Algorithm~\ref{localbasis}, we choose $K=70$, $\epsilon/(10\sqrt{2/\pi})=2\times 10^{-3}$, $r=5$. The average number of the local stochastic basis functions constructed offline is
\begin{equation*}
	k=17.6.
\end{equation*}
This $k$ is quite small given that this problem has very high stochastic dimension $m=36$. In addition, $\max_ik_i=51$ is significantly larger than $k$, which implies the solution space has strong heterogeneous stochastic structure, and this is recognized by the HSFEM.

In the online stage, we solve the equation using forcing $f(x)=1+x-2y$. The expectation and standard deviation of the solution and numerical errors in these two quantifies are plotted in Figure~\ref{fig:mc_s_forcing}. 
\begin{figure}[htpb]
	\begin{center}
		\includegraphics[width=0.8\textwidth]{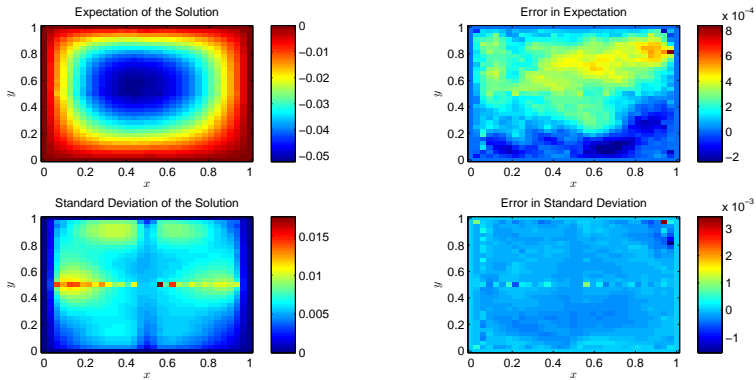}
	\end{center}
	\caption{Expectation and Standard Deviation of the solution.}
	\label{fig:mc_s_forcing}
\end{figure}
The relative numerical error $E_{HSFEM}$ and relative KL truncation error are listed in Table~\ref{tab:mc_result}. $E_{HSFEM}=1.04\times 10^{-1}$, which means the HSFEM numerical solution captures about $90\%$ of the stochastic part of the solution. The numerical error $E_{HSFEM}$ is significantly smaller than the KL truncation error $E_{KL}$. 
 
\begin{table}
	\centering
	\begin{tabular}{|c|c|c|c|}
	\hline
	$k$ & $E_{HSFEM}$ & $E_{KL}$\\
	\hline
	$17.6$ & $1.04\times 10^{-1}$ &  $4.42\times 10^{-1}$\\
	\hline
	\end{tabular}
	\caption{The relative $L^2(D\times\Omega)$ error $E_{HSFEM}$ and the relative KL truncation error $E_{KL}$.}
	\label{tab:mc_result}
\end{table}

Recall that in computing the numerical errors, we did not consider the discretization error in~\eqref{decomposeerror}, and we were comparing our numerical solutions with the MC or SC solutions. In this example, the number of MC sampling points $M=10^4$ is actually not very large, and the discretization error in~\eqref{decomposeerror} may be dominant. Nevertheless, our numerical results still demonstrate the capacity of the HSFEM in exploiting the local stochastic structure of the solution space to the discretized problem.

\section{Concluding Remarks}
\label{concludingsection}
A new concept of sparsity has been introduced for linear stochastic elliptic operator, which reflects the compactness of its inverse operator in the stochastic direction and allows for spatially heterogeneous stochastic structure of the corresponding solution space. This new concept of sparsity motivates a HSFEM framework for solving linear stochastic elliptic equations, which uses a problem-dependent and local stochastic basis to approximate the solution. This HSFEM framework provides a novel direction to attack the {\em curse of dimensionality} by exploiting the local stochastic structure of the solutions. 

Constructing a suitable local stochastic basis is a challenging task since the inverse of the elliptic operator is highly implicit, nonlinear and non-local. In this work we provide a sampling method to construct the local stochastic basis using the randomized range finding methods. It involves sampling the stochastic operator for $K$ times using randomly chosen forcing functions and some orthogonalization process. Hopefully, if the stochastic operator enjoys the Operator-Sparsity, a small $K$ is sufficient to identify the local stochastic structure of the solution space. The relatively expensive offline computation limits the application of the HSFEM to the multi-query setting. Methods to reduce the offline computational cost and an online error estimation and correction procedure are given.

Numerical results are presented to demonstrate the spatially heterogeneous stochastic structure of the solution space to stochastic elliptic equations, and the Operator-Sparsity for several elliptic operators with high dimensional stochastic input. The proposed HSFEM can recognize and respect the heterogeneous stochastic structure of the solution space, thus achieve high efficiency in the online stage.

We only consider Dirichlet problem in this paper, but our method can be easily generalized to deal with Neumann boundary condition problem. The key idea of the present work is the use of the heterogeneous coupling of spatial basis with local stochastic basis to approximate the solution, which should not be restricted to elliptic equations, and will be applied to time-dependent problems in our future work. Convergence analysis of the HSFEM framework will be given in another paper.

\section{Acknowledgements}
We would like to thank the two anonymous reviewers for their constructive comments which help improve the quality of this paper. This research was in part supported by Air Force MURI Grant FA9550-09-1-0613, DOE grant DE-FG02-06ER257, and NSF Grants No. DMS-1318377, DMS-1159138. 
\bibliographystyle{plain}
\bibliography{StochasticMethod}
\end{document}